
\documentclass[
final
]{dmtcs-episciences}

\usepackage{amsmath}

\usepackage[utf8]{inputenc}
\usepackage{subfigure}

%

\usepackage[round]{natbib}
\bibliographystyle{abbrvnat}
\setcitestyle{authoryear,open={((},close={))}}

\author{Andrea Jim\'enez \affiliationmark{1}\thanks{Partially supported by  
CONICYT/FONDECYT/POSTDOCTORADO 3150673 and N\'ucleo Milenio Informaci\'on y 
Coordinaci\'on en Redes ICM/FIC RC130003, Chile, and 
FAPESP-Brazil Proc.~2011/19978-5.}
  \and Yoshiko Wakabayashi \affiliationmark{2}\thanks{Partially supported by
    FAPESP Project (Proc.~2013/03447-6), CNPq (Proc.~456792/2014-7, 306464/2016-0) and Project MaCLinC of
    NUMEC/USP.}}
\title[On path-cycle decompositions of triangle-free
  graphs]{On path-cycle decompositions of triangle-free
  graphs}
\affiliation{
  CIMFAV, Facultad de Ingenier\'ia, Universidad de Valpara\'iso, Chile\\
  Instituto de Matem\'atica e Estat\'istica, Universidade de S\~ao Paulo, Brazil
 }
\keywords{path decomposition, cycle
decomposition, length constraint, odd distance, triangle-free, Conjecture of Gallai}
\received{2015-6-19}

\revised{2016-9-30}

\accepted{2017-10-15}

%
%

%

\newcommand{\coNP}{\co{NP}}
\newcommand{\co}[1]{\mathbf{co#1}}


%
\newtheorem{theorem}{Theorem}
\newtheorem{corollary}[theorem]{Corollary}

\newtheorem{definition}{Definition}

\newtheorem{lemma}[theorem]{Lemma}
\newtheorem{proposition}[theorem]{Proposition}

\newtheorem{observation}{Observation}

\newtheorem{conjecture}{Conjecture}
\newtheorem{claim}{Claim}
%
%

%
%



%
%
%
%
%

\def\ifpdf\input{#.pdf_t}\else\input{#.ps_t}\fi1{\ifpdf\input{#1.pdf_t}\else\input{#1.ps_t}\fi}
\usepackage{graphicx,color}
\DeclareGraphicsRule{.pdf}{pdf}{*}{}
%
%

\begin{document}
\publicationdetails{19}{2017}{3}{7}{659}
\maketitle
\begin{abstract}
  In this work, we study
  conditions for the existence of length-constrained path-cycle decompositions, that is, partitions 
  of the edge set of a graph into paths and cycles of a given minimum length.  
  Our main contribution is the characterization of the class of all
  triangle-free graphs with odd distance at least~$3$ that admit a
  path-cycle decomposition with elements of length at least~$4$.
  As a consequence, it follows that Gallai's conjecture on path decomposition holds 
  in a broad class of sparse graphs. 

\end{abstract}

\section{Introduction}~\label{theo:main}

All graphs considered here are simple, that is, without loops and
multiple edges. As usual, for a graph~$G$, we denote by $V(G)$ and $E(G)$ its
vertex set and edge set, respectively.  A collection of
subgraphs~$\mathcal{D}$ of $G$ is called a \emph{decomposition of
  $G$} if each edge of $G$ is
contained in exactly one subgraph of~$\mathcal{D}$.  If a
decomposition $\mathcal{D}$ of $G$ consists only of paths
(resp. cycles), then we say that it is a \emph{path decomposition}
(resp. \emph{cycle decomposition}) of $G$, and if it consists of paths
and cycles, then we say that it is a \emph{path-cycle decomposition}.

About fifty years ago, according to Lov\'asz~\cite{Lov68}, Gallai
conjectured the following bound on the cardinality of a path
decomposition of a connected graph:

\begin{conjecture}[Gallai's Conjecture]
  {Every connected graph $G$ on $n$ vertices has a path
    decomposition of cardinality at
    most~$\left\lceil\frac{n}{2}\right\rceil$.}
\end{conjecture}
Despite many efforts and attempts~\cite{JGT:JGT3190040207,Fan2005117,JGT:JGT21735,Lov68,Pyber1996152} 
to prove Gallai's conjecture, it
remains unsolved.  
The earliest and major progress towards this conjecture
was made by Lov\'asz. In~\cite{Lov68}, he proved that every graph on $n$ vertices has a
path-cycle decomposition of cardinality at
most~$\lfloor{n}/{2}\rfloor$.
Since any path-cycle decomposition of a graph has at least $p/2$ paths, where $p$ is the number of odd vertices in the graph, Lov\'asz's result implies that graphs with at most one even vertex
satisfy Gallai's conjecture; an \emph{even vertex} (resp. \emph{odd vertex}) is a vertex with even degree
(resp. odd degree). 
Dean and Kouider~\cite{DEAN200043} proved that every graph~$G$
has a path decomposition into at most $\frac{p}{2} + \left \lfloor \frac{2}{3} q\right \rfloor$ paths,
where $p$ (resp. $q$) is the number of odd (resp. even non-isolated) vertices in $G$.
Later, Harding and McGuinness~\cite{JGT:JGT21735} proved that this bound can be greatly improved 
for graphs of large girth. They showed the bound $\frac{p}{2} + \left \lfloor \frac{g+1}{2g} q\right \rfloor$, where
$g\geq 4$ denotes the girth of the graph~$G$.
As these works suggest, it seems to be particularly difficult to guarantee
the validity of Gallai's conjecture on graphs with many even vertices.
Indeed, the broadest subclass of Eulerian graphs, excluding the complete
graphs on an odd number of vertices, for which the
conjecture is known to be true is the one composed of graphs of
maximum degree~4~(see~\cite{FKfav}).


%
%

Our work contributes in several directions.
Nevertheless, it is interesting in its own right.
  Let $\mathcal{F}$ denote the set of paths and cycles of length {(number of edges)} at least~4.
  Our main result is the characterization of the class of
  triangle-free graphs where odd vertices are at distance at least three that do not admit  
 a decomposition into copies of graphs from~$\mathcal{F}$.
  We call those graphs \emph{hanging-square graphs}; which among others,
  satisfy Gallai's conjecture and can be 
  recognized in polynomial time.
%
 As a consequence, we can guarantee {that Gallai's
 conjecture holds for graphs} with a large number of even vertices and linear number of edges.  
 In particular, {we verify}
  the conjecture for a subclass of planar graphs.  
  In the following subsection, we formalize our results.

\noindent

\subsubsection*{Contributions}

The \emph{odd distance} of a graph $G$, denoted by $d_o(G)$, is the minimum distance 
between any pair of odd vertices of~$G$.
A path-cycle decomposition $\mathcal{D}$ of $G$ is called a \emph{4-pc decomposition} if 
every element of $\mathcal{D}$ has length at least~4; in other words, a $4$-pc decomposition
is a decomposition into copies of graphs from~$\mathcal{F}$.
We recall that a graph is \emph{triangle-free} if it does not have cycles of length~$3$.

{We focus our studies on the following set of graphs:}
%
$$\mathcal{G}:=\{G: G \hbox{ is a connected triangle-free graph with  } d_o(G)\geq 3 \}.$$

{We note} that $\mathcal{G}$ corresponds to the family of graphs that can be obtained
from any triangle-free Eulerian graph by removing a matching $M$ (possibly empty) that 
satisfies the following property: for every pair $e, e'
\in M$, the minimum distance between an end vertex of $e$ and an end vertex of $e'$
is {at least} $3$.

The main contributions of this work {are} the next theorem and some of its consequences. 
(For the definitions of \emph{hanging-square graph} and \emph{skeleton}, see Section~\ref{sec:h-sgraphs}.)
%
%
{
\begin{theorem}~\label{theo:main}
A graph in $\mathcal{G}$ has a 4-pc decomposition if and only if it is not a hanging-square graph.
\end{theorem}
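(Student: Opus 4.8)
The plan is to exploit a single structural observation that trivializes half of the length constraint: since every graph in $\mathcal{G}$ is triangle-free, every cycle already has length at least~$4$, so the cycle pieces of any path-cycle decomposition automatically meet the length requirement. Consequently a decomposition of $G\in\mathcal{G}$ is a $4$-pc decomposition precisely when all of its \emph{path} pieces have length at least~$4$. This immediately settles the Eulerian members of $\mathcal{G}$: an Eulerian graph decomposes into edge-disjoint cycles, and triangle-freeness makes every such cycle long enough, so every triangle-free Eulerian graph has a $4$-pc decomposition (and is therefore not a hanging-square graph). All of the real work thus concerns the odd vertices: in any path-cycle decomposition an odd vertex must be the endpoint of at least one path, and since the members of $\mathcal{G}$ have odd distance at least~$3$, these forced endpoints are spread far apart, which is the hypothesis the construction will lean on.

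For the necessity direction (a hanging-square graph admits no $4$-pc decomposition) I would argue locally around a defining square. The idea is that the four edges of the square, together with the parity conditions at its vertices and the triangle-free and odd-distance restrictions on how it attaches to the skeleton, can only be covered by a piece of length at most~$3$: the forced path-endpoints pin down where pieces must terminate, and a short count of the edges available to continue a piece through the square shows that one is always stranded in a path of length~$3$ or a shorter fragment. Since no piece of length at least~$4$ can absorb these edges, no $4$-pc decomposition exists; carrying this out amounts to a finite case check over the ways a square can hang on the skeleton.

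For the sufficiency direction I would proceed by induction on the number of edges, peeling off one piece of length at least~$4$ at a time. When $G$ is Eulerian the observation above finishes it, so assume $G$ has odd vertices. Since the odd distance is at least~$3$, every vertex lying within distance~$2$ of an odd vertex (other than that vertex itself) is even, so the neighbourhood of each odd vertex is locally Eulerian; I would use this slack to route, out of an odd vertex, a path of length at least~$4$, or to extract a long cycle in an Eulerian region away from the odd vertices. After removing the extracted piece I must show that the remaining graph $G'$ again lies in $\mathcal{G}$ and is non-hanging-square, or is a base case with an evident decomposition, so that the inductive hypothesis produces a $4$-pc decomposition of $G'$ which, together with the extracted piece, decomposes $G$.

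The main obstacle is exactly this invariant maintenance in the inductive step. Deleting edges is harmless for triangle-freeness, but it flips the parity of the endpoints of the removed piece, so it can create new odd vertices and, worse, place them within distance~$2$ of existing odd vertices, destroying membership in $\mathcal{G}$; it can also disconnect $G$ or shrink some region to where only a length-$3$ fragment survives, i.e. create precisely the hanging-square obstruction. The heart of the argument is therefore to show that whenever $G$ is not a hanging-square graph one can always choose the piece to peel so that $G'$ stays connected, keeps odd distance at least~$3$, and avoids becoming hanging-square --- in other words, that the hanging-square graphs are exactly the graphs at which every admissible peel is blocked. I expect this choice argument, together with the finite analysis of the irreducible configurations certifying the hanging-square structure, to be the technical core of the whole proof.
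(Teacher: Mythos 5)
Your proposal is a plan rather than a proof, and both halves have genuine gaps. For the necessity direction, arguing ``locally around a defining square'' cannot work: the hanging-square graphs include all skeletons, which are trees and contain no squares at all, and even when squares are present the pieces of a putative $4$-pc decomposition can run arbitrarily far into the skeleton, so no bounded local count of edges around one square can strand a short fragment. The paper's proof of this direction (Proposition~\ref{prop:hsgrhno4}) is instead a global minimum-counterexample argument whose engine is a structural claim about every brrb-path $P$ of the skeleton: for each edge $e\in E(P)$, the graph $H-e$ has a component with at most $3$ edges. This forces all building paths and bunches to share a single joint on one fixed brrb-path, and only then does a short counting and parity argument finish. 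None of that is visible from a single square.

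For sufficiency, you correctly identify the obstacle --- peeling a path flips the parity of its endpoints and can destroy the odd-distance condition, disconnect the graph, or create the hanging-square obstruction --- but you leave exactly that step unproven, and the invariant you propose to maintain (that the residue stays in $\mathcal{G}$ and stays non-hanging-square) is not what the paper maintains and is not in general maintainable. The paper's route is different: it first shows a minimum counterexample is a tree by peeling \emph{cycles}, which preserve every vertex parity and hence keep each component in $\mathcal{G}$, while explicitly allowing the residue to \emph{be} a hanging-square graph; that case is then handled by a battery of gluing lemmas (Lemmas~\ref{lemma:length5}, \ref{lemma:length4}, \ref{lemma:main}, \ref{lemma:element}, \ref{lemma:2basis}, \ref{lemma:2cycles}) showing that a hanging-square graph together with an edge-disjoint cycle or path still decomposes. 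In the tree case it only peels a $3$-path from a leaf to a nearby odd vertex, or a path of length at least~$4$ both of whose ends are leaves --- again parity-preserving operations --- and the identification of the irreducible trees with skeletons (Theorem~\ref{theo:hangtrees}) is where the recursive definition actually earns its keep. Your Eulerian observation is correct but carries little weight, since hanging-square graphs always contain odd vertices. As it stands, the technical core of both directions is missing.
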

}
%
As a corollary, we have the following statements. 
%
%
{
\begin{corollary}~\label{coroll:main} Every graph in $\mathcal{G}$ on $n$ vertices with at most $4\,\lceil{n}/{2}\rceil$ 
edges has a path decomposition 
  into at most $\lceil{n}/{2}\rceil$ paths.
\end{corollary}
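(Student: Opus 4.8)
The plan is to derive the corollary from Theorem~\ref{theo:main} by a pieces-counting argument, splitting into the case where $G$ is a hanging-square graph and the case where it is not.

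Assume first that $G$ is \emph{not} a hanging-square graph. Then Theorem~\ref{theo:main} provides a $4$-pc decomposition $\mathcal{D}$ of $G$, that is, a decomposition into paths and cycles each of length at least $4$. Since each element of $\mathcal{D}$ uses at least four edges, writing $m=|E(G)|$ we get
$$ |\mathcal{D}| \;\le\; \frac{m}{4} \;\le\; \frac{4\lceil n/2\rceil}{4} \;=\; \left\lceil \tfrac{n}{2}\right\rceil . $$
So it remains to convert $\mathcal{D}$ into a genuine path decomposition without increasing the number of parts.

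For the conversion I would isolate the following statement: \emph{if a connected graph admits a path-cycle decomposition into $k$ parts and is not itself a single cycle, then it admits a path decomposition into at most $k$ parts.} The argument removes cycles one at a time while preserving the total count. If $\mathcal{D}$ still contains a cycle $C$ together with at least one further element, then, $G$ being connected, $C$ shares a vertex $v$ with some other element $D$. When $D$ is a path, the edge set $E(C)\cup E(D)$ is connected, has exactly two vertices of odd degree (the ends of $D$) and at most one vertex of degree greater than two, hence splits into two simple paths; when $D$ is a cycle, $E(C)\cup E(D)$ is a ``figure-eight'' that likewise splits into two simple paths. In each case two elements are replaced by two paths, so the cardinality is unchanged while the number of cycles drops. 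Iterating eliminates all cycles and yields a path decomposition of size $|\mathcal{D}|\le\lceil n/2\rceil$. The only way the process can stall is when a single cycle remains, i.e.\ $G$ is itself a cycle $C_k$ with $k\ge 4$; then two paths suffice and $2\le\lceil n/2\rceil$.

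I expect two delicate points. The first is internal to the conversion: to guarantee that the two replacement paths are \emph{simple} one must choose $v$ and the re-cut edges carefully when the absorbed cycle and its host share several vertices, for instance by always absorbing a cycle that is innermost with respect to the shared-vertex pattern, so that opening it along its cyclic order keeps the pieces simple. The second, and I expect the harder obstacle, is the hanging-square case. Here Theorem~\ref{theo:main} denies a $4$-pc decomposition, so the counting above does not apply; instead I would use the explicit description of Section~\ref{sec:h-sgraphs}, decomposing $G$ into long paths and cycles following the skeleton plus a controlled number of short pieces arising from the pendant squares, and then bound the number of parts by $\lceil n/2\rceil$ directly from that structure. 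Making this bound tight in the absence of a $4$-pc decomposition is the crux of this case.
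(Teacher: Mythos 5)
Your first case starts exactly as the paper does: Theorem~\ref{theo:main} gives a $4$-pc decomposition $\mathcal{D}$ with $|\mathcal{D}|\le m/4\le\lceil n/2\rceil$, and everything hinges on trading cycles for paths without increasing the count. But the conversion principle you isolate --- \emph{a connected graph with a path-cycle decomposition into $k$ parts, other than a single cycle, admits a path decomposition into at most $k$ parts} --- is false, and the local step it rests on fails. When a cycle $C$ and another element $D$ share several vertices, $E(C)\cup E(D)$ has many vertices of degree greater than $2$ and need not split into two \emph{simple} paths at all. Concretely, in $K_5-uv$ take the $5$-cycle $u\,w_1\,v\,w_2\,w_3\,u$ and the edge-disjoint path $u\,w_2\,w_1\,w_3\,v$: their union is $K_5-uv$, which has $9$ edges on $5$ vertices and so cannot be covered by two simple paths (one would need length at least $5$). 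No choice of the vertex $v$ or of an ``innermost'' cycle overcomes this counting obstruction. A second sanity check: if your general statement were true, Gallai's conjecture for connected non-cycles would follow immediately from Lov\'asz's theorem that every graph has a path-cycle decomposition of cardinality at most $\lfloor n/2\rfloor$; so it cannot be available by an elementary swapping argument.

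The paper closes exactly this gap by keeping quantitative control over the pieces being merged. It takes $\mathcal{D}$ maximal with respect to the number of paths; any path or cycle of length at least $8$ could be split into two paths of length at least $4$, increasing the path count while preserving the bound $|\mathcal{D}|\le m/4$, so all pieces may be assumed to have length at most $7$. For such short pieces in a triangle-free graph, Lemma~\ref{JGT} (a cycle plus an intersecting path of length at most $7$) and Lemma~\ref{JGTcycles} (two intersecting cycles of length between $4$ and $7$) really do yield two paths, and their proofs use both the triangle-freeness and the length bound to limit how many vertices the two pieces can share --- these hypotheses are doing genuine work that your argument omits. Your worry about the hanging-square case is reasonable but already resolved elsewhere in the paper: Observation~\ref{obs:} gives such a graph a path-cycle decomposition with one $3$-path and all other elements of length $4$ or $6$, whence Proposition~\ref{obs:gallaiforhs}; so that case needs no new structural analysis, although the paper's own write-up of the corollary silently restricts to the non-hanging-square case.
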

}
From the proof of Corollary~\ref{coroll:main}, one can also obtain that each path 
of the decomposition has length at least 3, which is best possible.
%
%
{Gallai's conjecture is open in the class of planar graphs.
This is quite surprising if we consider that
Haj\'os' conjecture, which states that every Eulerian graph  on $n$ vertices has a cycle
decomposition of cardinality at most~$\lfloor{n}/{2}\rfloor$,
has been positively settled for planar graphs~\cite{Seyffarth1992291}.
Since a planar triangle-free graph on $n$ vertices has at most $2n-4$ edges, 
the next result follows immediately from Corollary~\ref{coroll:main}.}

{
\begin{corollary}~\label{coroll:mainsec} Every planar graph in $\mathcal{G}$ satisfies Gallai's conjecture.  
\end{corollary}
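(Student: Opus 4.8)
The plan is to derive this statement directly from Corollary~\ref{coroll:main}, so the entire task reduces to verifying that every planar graph $G \in \mathcal{G}$ meets the edge-count hypothesis of that corollary. Let $G \in \mathcal{G}$ be planar and write $n = |V(G)|$. By the definition of $\mathcal{G}$, the graph $G$ is connected and triangle-free, hence in particular a simple planar graph of girth at least~$4$. The only nontrivial ingredient I would invoke is the standard consequence of Euler's formula: a simple planar graph on $n \ge 3$ vertices with no triangles has at most $2n - 4$ edges. The degenerate cases $n \le 2$ are immediate, since a connected graph on at most two vertices has at most one edge.

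It then remains to check the elementary inequality $2n - 4 \le 4\lceil n/2\rceil$, which ensures that the hypothesis ``$|E(G)| \le 4\lceil n/2\rceil$'' of Corollary~\ref{coroll:main} holds. Since $\lceil n/2\rceil \ge n/2$, we have $4\lceil n/2\rceil \ge 2n$, and therefore $|E(G)| \le 2n - 4 < 2n \le 4\lceil n/2\rceil$. Consequently $G$ lies within the scope of Corollary~\ref{coroll:main}.

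Applying Corollary~\ref{coroll:main} to $G$ then produces a path decomposition of $G$ into at most $\lceil n/2\rceil$ paths, which is precisely the bound asserted by Gallai's conjecture for the connected graph $G$, completing the argument. I do not anticipate any genuine obstacle here: all of the difficulty has already been absorbed into Theorem~\ref{theo:main} and Corollary~\ref{coroll:main}, so what remains is merely the girth-based edge bound combined with the arithmetic inequality above. The one point worth stating with care is the treatment of the boundary behaviour (very small $n$, and the rounding in $\lceil n/2\rceil$), but neither of these threatens the conclusion.
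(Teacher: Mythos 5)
Your argument is correct and is exactly the paper's route: the authors also deduce Corollary~\ref{coroll:mainsec} from Corollary~\ref{coroll:main} via the bound $|E(G)|\le 2n-4$ for triangle-free planar graphs, with the inequality $2n-4\le 4\lceil n/2\rceil$ left implicit. Your write-up just makes the arithmetic and the small-$n$ cases explicit.
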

}

We believe
  that this work contributes with a substantial step towards showing {that 
  Gallai's conjecture holds} for the class of planar graphs.

%
As we will see, the class of hanging square graphs can be defined
recursively, and for that, we define first the subclass of such graphs
that are acyclic, and name them skeletons. For them, the following holds.

{
\begin{corollary}~\label{coroll:mainskeleton} Every tree in $\mathcal{G}$ either has a decomposition into paths of length at least~4 or is a skeleton. 
\end{corollary}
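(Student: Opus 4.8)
The plan is to obtain this corollary as a direct specialization of Theorem~\ref{theo:main} to the acyclic case. First I would fix a tree $T \in \mathcal{G}$ and record the crucial structural simplification: since $T$ is acyclic it contains no cycles whatsoever, and in particular none of length at least~$4$. Consequently every $4$-pc decomposition of $T$ can use no cycle elements, so it is forced to be a decomposition into paths of length at least~$4$. Thus, for a tree, the notions of \emph{$4$-pc decomposition} and \emph{decomposition into paths of length at least~$4$} coincide.

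Next I would invoke Theorem~\ref{theo:main}, which applies precisely because $T \in \mathcal{G}$. The theorem asserts that $T$ admits a $4$-pc decomposition if and only if $T$ is not a hanging-square graph. Combining this with the observation above already yields: $T$ has a decomposition into paths of length at least~$4$ if and only if $T$ is not a hanging-square graph.

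The final step is purely definitional. By construction (see Section~\ref{sec:h-sgraphs}), skeletons are exactly the hanging-square graphs that are acyclic. Since $T$ is a tree, and hence acyclic, $T$ is a hanging-square graph if and only if $T$ is a skeleton. Substituting this equivalence into the statement of the previous paragraph gives: $T$ has a decomposition into paths of length at least~$4$ if and only if $T$ is not a skeleton. In particular, at least one of the two alternatives in the corollary always holds, which is the desired conclusion.

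I do not anticipate a genuine obstacle here, as the whole argument is a translation of Theorem~\ref{theo:main} through two elementary observations. The only point requiring care is confirming that the definitions align as claimed---namely that ``skeleton'' is indeed the acyclic subclass of hanging-square graphs, and that a tree satisfying the odd-distance condition does lie in $\mathcal{G}$ and hence within the scope of Theorem~\ref{theo:main}---but both are immediate from the setup.
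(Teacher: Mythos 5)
Your proposal is correct and matches the paper's route: the corollary is just the specialization of the main characterization to trees, using that a $4$-pc decomposition of an acyclic graph contains no cycles and that the acyclic hanging-square graphs are by construction exactly the skeletons (those with an empty set of bunches). Indeed, the paper's Theorem~\ref{theo:hangtrees} already states this tree case verbatim in the language of $4$-pc decompositions, so nothing beyond your two definitional observations is needed.
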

}

{We observe that the skeletons can be recognized in polynomial time. 
Thus, in view of the above corollary, the \emph{certificate} that a 
tree is a skeleton (its \emph{building sequence}, as we will define) can be seen as a short certificate that it cannot be decomposed into paths of length at least~$4$.}
At the end of Section~\ref{sec:concludremarks}, we discuss a polynomial time strategy
to recognize hanging-square graphs.


\subsubsection*{Organization of the paper}

In Section~\ref{sec:h-sgraphs}, we define the class of {skeletons and} hanging-square graphs.  
Section~\ref{sec:proofth1} is devoted to the proof of Theorem~\ref{theo:main}.  
In Subsection~\ref{sec:cor2} we show Corollary~\ref{coroll:main}. 
Section~\ref{sec:proph-sgraphs} contains several properties regarding hanging-square graphs
which are fundamental for the proofs of Theorem~\ref{theo:main}.
Finally, Section~\ref{sec:concludremarks} contains some concluding remarks.


\section{Hanging-square graphs}\label{sec:h-sgraphs}

{Throughout this paper, we denote a path of length~$k$ (number of its edges) by a sequence of its
  vertices, as for example, $P=x_0 x_1\cdots x_k$ and say that it is a \emph{$k$-path}.}

In the following, we introduce a class of trees, called \emph{skeletons}. For that,
we consider $k$-paths $x_0\cdots x_k$, with $k \in \{3,4,6\}$,
and 
for each $k \in \{4,6\}$, we
say that (the middle vertex) $x_{\frac{k}{2}}$ is the \emph{joint} of
$P$.

\begin{definition}[Skeletons]\label{def:ske}
  A \emph{skeleton} is a  tree $T$ that admits a black-red
  coloring~$\lambda$ of $V(T)$, a sequence $T_0,T_1, \ldots, T_t$ of
  trees, and a sequence $P_1, \ldots,P_t$ of paths, each of which is a $4$-path or a $6$-path, 
  such that: 
  \begin{itemize}
  \item $T_0$ is a $3$-path, $T_t = T$, and
  for $i\in[t]$ the tree $T_i$ is obtained from $T_{i-1}$ by
  adding $P_i$ so that the joint of $P_i$ is identified
  with a vertex of $T_{i-1}$;
   \item for $T_0=x_0x_1x_2x_3$, we have $\lambda(x_0)= \lambda(x_3)=$ black 
  and $\lambda(x_1)= \lambda(x_2)=$ red;
  \item for each $P_i=x_0\cdots x_k$, $i\in[t]$, it holds that
  $\lambda(x_0)= \lambda(x_k)=$ black and $\lambda(x_1)= \lambda(x_{k-1})=$ red.
  If $k=4$, then $\lambda({x_{2}})=$ red. 
  If $k=6$, then $\lambda(x_{3})=$ black 
  and $\lambda(x_2)=\lambda(x_4)=$ red.
  \end{itemize}
\end{definition}
\noindent
In Figure~\ref{fig:constructionske} we show an example
of a skeleton $T$, obtained from a sequence $T_0, T_1, \ldots, T_7=~T$.

The following observation  helps understanding skeletons.

\begin{observation}\label{obs:black-red}
  Let $T$ be a skeleton and $\lambda$, $\lambda'$ be colorings of $V(T)$ as in Definition~\ref{def:ske}.  
  Then, $\lambda=\lambda'$, the set of odd vertices of $T$ is
  $\{v: \lambda(v) = \text{black}\} $ and the set of even vertices of $T$ is
  $\{v: \lambda(v) = \text{red}\}$.
 \end{observation}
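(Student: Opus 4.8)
The plan is to prove Observation~\ref{obs:black-red} in two stages: first establish that the parity of each vertex is forced by its color (independently of which valid coloring we pick), and then deduce uniqueness of the coloring from this parity characterization. The key structural fact I would exploit is that in a skeleton, every vertex receives its color from exactly one ``generation'' of the construction — either it is one of the four vertices of the base path $T_0$, or it is one of the $k-1$ non-joint vertices of some added path $P_i$, or it is a joint of some $P_i$ identified with a previously existing vertex. I would argue that joints are the only vertices where degree contributions accumulate, so they are exactly the vertices whose parity must be tracked carefully.

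First I would compute, for a fixed valid coloring $\lambda$ and the associated sequences $T_0,\dots,T_t$ and $P_1,\dots,P_t$, the degree of each vertex in $T=T_t$ by induction on the construction. Each time we add a path $P_i = x_0\cdots x_k$, its joint $x_{k/2}$ is identified with some existing vertex $v$, contributing exactly $2$ to $\deg(v)$ (the two edges of $P_i$ incident to its middle vertex), while every non-joint vertex of $P_i$ is brand new and gets its final degree from $P_i$ alone: the endpoints $x_0,x_k$ have degree $1$, and the interior non-joint vertices $x_1,x_{k-1}$ (and, when $k=6$, $x_2,x_4$) have degree $2$. The crucial observation is that adding a path changes the degree of an existing vertex only by the even amount $2$, so parities are preserved under the construction except for the newly introduced vertices. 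Comparing against the coloring rules in Definition~\ref{def:ske}, I would check case by case that black vertices always end up with odd degree and red vertices with even degree: endpoints of the base path and of each $P_i$ are black and have odd degree (degree $1$ plus an even number of joint-contributions), the middle of a $4$-path is red and even (it is the joint, so degree $2$ plus even contributions), the middle of a $6$-path is black and odd, and so on. This yields the parity characterization $\{v:\lambda(v)=\text{black}\}=\{\text{odd vertices}\}$ and $\{v:\lambda(v)=\text{red}\}=\{\text{even vertices}\}$.

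Once the parity characterization is in hand, uniqueness is immediate and essentially definitional: if $\lambda$ and $\lambda'$ are two colorings satisfying Definition~\ref{def:ske}, then both must assign black precisely to the odd vertices of $T$ and red precisely to the even vertices. Since the set of odd (respectively even) vertices of $T$ is an intrinsic property of $T$ and does not depend on the choice of coloring or construction sequence, we get $\lambda(v)=\lambda'(v)$ for every $v\in V(T)$, hence $\lambda=\lambda'$.

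The main obstacle I anticipate is bookkeeping the inductive degree argument cleanly, in particular verifying that no vertex's parity is corrupted by serving as a joint for several added paths. The point to nail down is that a joint identification always adds the even quantity $2$, so a vertex that started odd stays odd and one that started even stays even no matter how many times it is reused as a joint; thus the parity is determined entirely by the single generation in which the vertex was introduced together with its prescribed color there. I would also want to confirm that the base case $T_0=x_0x_1x_2x_3$ is consistent: its endpoints are black with degree~$1$ (odd) and its two interior vertices are red with degree~$2$ (even), matching the claim before any paths are attached. With that base case and the even-increment property, the induction closes and the observation follows.
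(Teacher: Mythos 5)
Your proposal is correct; the paper states Observation~\ref{obs:black-red} without any proof, and your inductive degree--parity argument is precisely the justification it implicitly relies on. The key points are all in place: non-joint vertices of each building path receive their final degree (and colour) at the moment they are introduced, each joint identification adds exactly $2$ to an existing vertex's degree and so preserves parity, the prescribed joint colours (red for a $4$-path, black for a $6$-path) are consistent with the parity of the vertex they attach to, and uniqueness of $\lambda$ then follows because the odd/even partition of $V(T)$ is intrinsic to $T$.
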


The sequence $P_0, P_1, \ldots, P_t$, where $P_0=T_0$, is called a
\emph{building sequence} of $T$ and each $P_i$ a \emph{building path}. 
We might denote the skeleton $T$ by its building sequence $P_0, P_1, \ldots, P_t$.
Every time we consider a skeleton $T$,
we {implicitly} assume that it comes with a black-red coloring $\lambda$, as defined above.
The following result is not needed in what follows, but it 
is a noteworthy property of skeletons.

\begin{observation}
If $T$ is a skeleton, then all building sequences of~$T$ have the same number of building paths.
\end{observation}

We refer to each 3-path of $T$ colored black-red-red-black as a \emph{brrb-path}. Those paths
play a fundamental role throughout this work. In particular,  
the following holds: for every brrb-path~$P$ in $T$, there is a
building sequence of~$T$ that starts at~$P$ (see~Proposition~\ref{prop:fromanypath}). 

 \begin{figure}[h]
 \centering
 \ifpdf\input{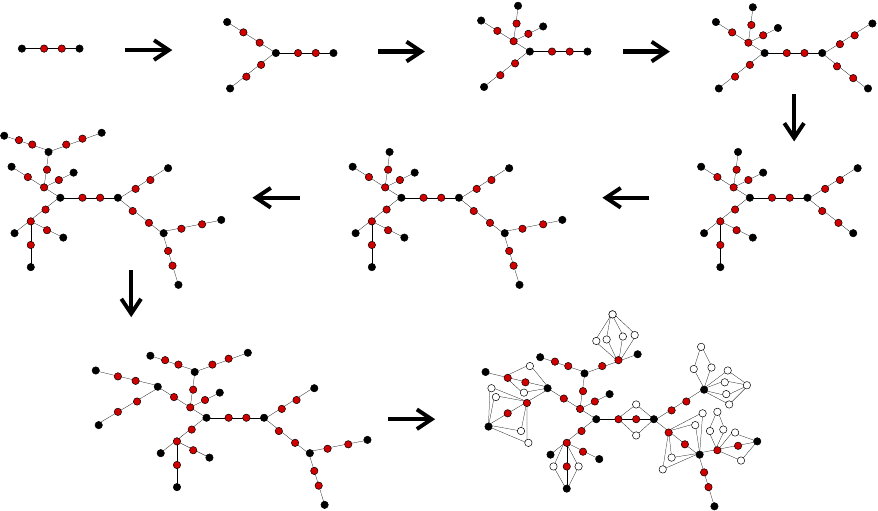_t}\else\input{construction.ps_t}\fi
 \caption{Construction of a skeleton $T$ obtained from
  the sequence $T_0, T_1, \ldots, T_7=T$. It has a building sequence $P_0,P_1,\ldots, P_7$, where $P_0 = T_0$, $P_2$ and $P_4$ are $4$-paths, and $P_1$,$P_3$,$P_5$ and $P_7$ are $6$-paths. The graph $H$ is an example of a hanging square graph with skeleton $T$.}
 \label{fig:constructionske}
 \end{figure}

For simplicity, we may refer to a cycle of length~$4$ as a \emph{square}.  

\begin{definition}[Bunch of squares]\label{def:blocks} 
  Let $k \geq 1$. A \emph{bunch $B$ of $k$ squares} (or simply, a \emph{bunch}) is a graph obtained from the union of $k$ pairwise
  edge-disjoint squares, say $Q_1,\ldots,Q_k$, such that each of these squares contains
  two non-adjacent vertices $a$, $b$,
  and $V(Q_i)\cap V(Q_j)=\{a,b\}$ for $1\leq i<j\leq k$. 
\end{definition}

In other words, a {bunch $B$ of $k$ squares} is 
a complete bipartite graph $K_{2,2k}$, where one of the partition classes consists
of $a$ and $b$.

{The common non-adjacent vertices $a$, $b$} of a bunch are called \emph{joints}. In the case that 
the bunch is a square (namely, $k$=1 in Definition~\ref{def:blocks}) there are two pairs 
of joints. For consistency, we consider only one of them as joints. Thus, 
each bunch has exactly one pair of joints. 

Let $\mathcal{B}$ be a set of bunches.
We say that a bunch $B \in \mathcal{B}$ with joints $a, b$ is \emph{maximal} if 
there is no $B' \in \mathcal{B}$ distinct of $B$ with joints $a, b$.

\begin{definition}[Hanging-square graphs]\label{def:h-sss} 
A graph $H$ is a hanging-square graph if it is the union of a skeleton $T_H$
and a set $\mathcal{B}$ of maximal bunches so that the following holds.
\begin{itemize}
\item[(i)] For each $B \in \mathcal{B}$, the vertices in $V(B) \cap V(T_H)$ 
are joints of $B$;
accordingly, if $|V(B) \cap V(T_H)|=1$, then
we call $B$ a \emph{1-bunch}, and if $|V(B) \cap V(T_H)|=2$, then
we call $B$ a \emph{2-bunch}. Moreover, if $B' \in \mathcal{B}$ and $V(B) \cap V(B') \neq
\emptyset$, then $V(B) \cap V(B') \subseteq V(T_H)$.

\item[(ii)] If $B$ is a 2-bunch, then there exists a brrb-path $P_B=x_0x_1x_2x_3$ in $T_H$  
such that $x_0, x_2$ are the joints of $B$. Moreover, 
if $B' \in \mathcal{B}$ and $B \neq B'$, then $P_B \neq P_{B'}$. We say that the path $P_B$ is \emph{occupied} 
at $x_0, x_2$ by $B$. 

\item[(iii)] If $P_B = x_0x_1x_2x_3$ is a brrb-path occupied at $x_0,x_2$ by a 2-bunch $B$, then $d_H(x_1) = 2$. 
Moreover, for $H' = H-E(B)$, we have 
either $d_{H'}(x_0) = 1$
or $d_{H'}(x_2) = 2$.
  \end{itemize}
 
\end{definition}

In particular, in Definition~\ref{def:h-sss}, item~(ii) says that
each brrb-path is occupied either at none, or at 2 vertices and, 
 item~(iii)  
 says that $x_1$  is neither the joint of a 1-bunch, nor 
 the joint of a building path of $T_H$, and that 
 joints of 1-bunches and joints of building paths of $T_H$ may
 correspond either to $x_0$ and $x_3$, 
 or to $x_2$ and $x_3$ (namely, such joints cannot simultaneously correspond to $x_0$ and $x_2$).

 \medskip
 
We now prove that hanging-square graphs satisfy Gallai's conjecture.
For that, we use a result of Fan~\cite{Fan2005117}. 
A \emph{block} of a graph is a maximal connected subgraph without a cut vertex 
(i.e. a vertex whose deletion increases its number of connected components). 
Given a graph $G$, the \emph{even graph} of $G$ is the subgraph of $G$ induced by the vertices
of even degree in $G$.

\begin{theorem}[Corollary of the Main Theorem of~\cite{Fan2005117}]\label{theo:fan}
Let $G$ be a graph. If each block of the even graph of $G$ is a triangle-free graph of maximum degree at most 3,
then $G$ satisfies Gallai's conjecture.
\end{theorem}

\begin{proposition}\label{obs:gallaiforhs}
If $H$ is a hanging-square graph, then $H$ satisfies Gallai's conjecture.
\end{proposition}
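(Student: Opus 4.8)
The plan is to prove Proposition~\ref{obs:gallaiforhs} by leveraging Observation~\ref{obs:} to produce an explicit path-cycle decomposition and then converting it into a path decomposition whose cardinality meets Gallai's bound. Concretely, let $H$ be a hanging-square graph on $n$ vertices. By Observation~\ref{obs:}, $H$ admits a path-cycle decomposition $\mathcal{D}$ in which exactly one element is a $3$-path and every other element is a path or cycle of length~$4$ or~$6$. My first step is to count the elements of $\mathcal{D}$ in terms of $n$ and to bound this count by $\lceil n/2\rceil$; since each element has length at least~$3$, a decomposition of a graph with $m$ edges into elements of length $\geq 3$ has at most $m/3$ parts, so I would need a handle on $m$ relative to $n$ for hanging-square graphs.

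The cleaner route, which I expect to be the main line of argument, is to pass through a known structural reduction rather than raw edge-counting. Gallai's conjecture is trivially true for graphs all of whose components have at most one even vertex (by Lov\'asz's theorem quoted in the introduction), so the essential content is controlling the even vertices. Here I would invoke the cited result of Pyber~\cite{Pyber1996152}, as the statement itself suggests: Pyber proved that a graph in which every cycle contains a vertex of odd degree satisfies Gallai's conjecture. So the plan is to verify that every cycle of a hanging-square graph passes through an odd vertex. Using Observation~\ref{obs:black-red}, the odd vertices of the skeleton $T_H$ are exactly its black vertices, and the red vertices are even; the bunches of squares are attached at joints that, by Definition~\ref{def:h-sss}, are identified with vertices of $T_H$. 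The key verification is that any cycle in $H$ must traverse a black (hence odd) vertex of the skeleton or acquire an odd vertex from the attachment structure of the bunches.

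The main obstacle will be ruling out a cycle that lives entirely among even vertices. Cycles in $H$ arise either inside a single bunch, across several squares of one bunch, or by combining bunch edges with skeleton edges; the skeleton itself is a tree and contributes no cycle alone. For a cycle confined to the squares of a bunch with joints $a,b$, that cycle must pass through both joints $a$ and $b$ (any two distinct squares meet only in $\{a,b\}$), so I need the degree analysis of Definition~\ref{def:h-sss}(iii) to force at least one of the relevant attachment vertices to be odd in $H$. I would argue that a $2$-bunch attached to a brrb-path $x_0x_1x_2x_3$ raises the degrees of its joints by~$2k$ (keeping parity) while the skeleton already makes the black endpoints odd, and that the constraints $d_{H'}(x_2)=2$ or $d_{H'}(x_2)>2$ in item~(iii) guarantee the joint vertices of any intra-bunch cycle have odd degree in $H$. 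Carefully checking the three sources of cycles against the parity data of Observation~\ref{obs:black-red} is the delicate part; once that is done, Pyber's theorem applies verbatim.

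Alternatively, if the parity-of-cycles condition turns out to be awkward at some attachment configuration, I would fall back on the direct route flagged by the phrase ``or using a result of Pyber'': take the decomposition $\mathcal{D}$ from Observation~\ref{obs:}, replace the $3$-path and the cycles by paths (each cycle of length $\ell$ splits into paths, and an Eulerian-style recombination merges segments through even vertices), and bound the resulting number of paths by $\lceil n/2\rceil$ using that $H$ has $O(n)$ edges of length-$\geq 3$ structure. The cleanest presentation, though, is the Pyber invocation, so I would lead with the cycle-parity verification and cite Pyber~\cite{Pyber1996152} to conclude.
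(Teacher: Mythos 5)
Your primary route breaks at exactly the step you flag as delicate: it is \emph{not} true that every cycle of a hanging-square graph contains a vertex of odd degree, so the theorem of Pyber you want to invoke does not apply verbatim. By Observation~\ref{obs:black-red} the red vertices of the skeleton are even, and attaching a bunch of $k$ squares adds even degree everywhere ($2k$ at each joint, $2$ at the other vertices), so the odd vertices of $H$ are precisely the black vertices of $T_H$ and every bunch vertex that is not an identified joint is even. Definition~\ref{def:h-sss} permits a $1$-bunch to be hung at a \emph{red} vertex: item~(iii), case~2, explicitly allows the joint of a $1$-bunch at $x_2$, and nothing forbids hanging one at a red vertex of an unoccupied brrb-path. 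Concretely, take $T_H=x_0x_1x_2x_3$ and attach a single square at $x_2$; this is a hanging-square graph in $\mathcal{G}$, and that square is a cycle all of whose vertices have degree $2$ or $4$ in $H$. Your proposed fix via the conditions $d_{H'}(x_2)=2$ or $d_{H'}(x_2)>2$ cannot work: those constrain $H-E(B)$ for a $2$-bunch $B$ and have no bearing on the parity of $x_2$, which is even in $H$ regardless of what is attached to it.

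The paper itself offers essentially no proof --- it asserts the proposition follows from Observation~\ref{obs:} (or from Pyber) --- and the viable argument is the one you relegate to a fallback and do not carry out: convert the decomposition of Observation~\ref{obs:} into a path decomposition and count. The count is not entirely automatic: the base $3$-path and each $4$- or $6$-path building path introduce at least four new vertices per path, and a bunch of $k\ge 2$ squares with joints $a,b$ introduces at least $2k$ new vertices and decomposes into $k$ paths of length~$4$ (namely $u_{2i-1}\,a\,u_{2i}\,b\,u_{2i+1}$ with indices mod $2k$), but a bunch consisting of a single square cannot simply be split into two paths, since a $2$-bunch of one square contributes only two new vertices and would locally exceed the $\lceil n/2\rceil$ budget; such squares must instead be merged with an incident skeleton path. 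You should lead with this counting and merging argument rather than with the cycle-parity verification, which is false as stated.
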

\begin{proof} 
Let $H$ be a hanging-square graph and $T_H$ be its skeleton. 
If $C$ and $C'$ are squares such that $V(V)\cap V(C')\neq \emptyset$, $E(V)\cap E(C')= \emptyset$,
and $C\cup C'$ is triangle-free, then one can easily find a decomposition of $C\cup C'$ into two paths.
Hence, we can assume that $H-E(T_H)$ is a union of isolated squares and vertices.

Let $H'$ be the even graph of $H$.
On the one hand, if $v \in V(H') \cap V(T_H)$ and the degree of $v$ is at least 2, then   
$v$ is a cut vertex of $H'$. 
On the other hand, if $v \in V(H') - V(T_H)$, then $v$ is a vertex of a square.
Hence, each block of $H'$ is a vertex, an edge or a square.
Due to Theorem~\ref{theo:fan}, graph $H$ satisfies
Gallai's conjecture. 
\end{proof}

%
%
%

%
%
%

\section{Proof of the main results}\label{sec:proofth1}

In this section we discuss the proof of Theorem~\ref{theo:main}
and of Corollary~\ref{coroll:main}.
Recall that a 4-pc decomposition is a decomposition into paths and cycles whose lengths
are at least 4. 
The main result of this work
is a characterization of the class of connected triangle-free graphs 
with odd distance at least $3$ having no 4-pc decompositions. 
We prove that this class is exactly the class of all hanging-square graphs.
The proof of Theorem~\ref{theo:main} relies on the following two facts:\vspace*{-0.15cm}
\begin{enumerate}
 \item[Fact 1.] Each graph in $\mathcal{G}$ is a hanging-square graph, or 
 has a 4-pc decomposition (Theorem~\ref{prop:tree}).\vspace*{-0.15cm}
 \item[Fact 2.] Hanging-square graphs do not admit a 4-pc decomposition (Proposition~\ref{prop:hsgrhno4}).
\end{enumerate}

The proof of Fact 1 needs the use of some technical results regarding the structure of the 
hanging-square graphs {which are postponed to Section~\ref{sec:proph-sgraphs}.}

In Section~\ref{subsec:skeleton}, we 
introduce some notions and results that are helpful in 
the proofs. In particular, at the end of the subsection, we prove the first
step to complete Theorem~\ref{prop:tree}. Namely, we show that trees in $\mathcal{G}$ are skeletons, or 
 have a 4-pc decomposition.

\subsection{Properties of skeletons}\label{subsec:skeleton}

To simplify notation, if $E'$ is a a subset of the edge set of a graph
$G$, we denote by $G-E'$ the graph obtained from $G$ by first removing
the edges from $E'$ and then deleting all isolated
vertices. Consistently, whenever we refer to the deletion of an edge
set of a graph, we also consider that the resulting graph has no
isolated vertices.
 
 \begin{observation}\label{obs:redblneigh}
      Let $T$ be a skeleton. Then, each red vertex in  $T$ has a neighbor that is black.
 \end{observation}
 \begin{proof}
   Let $P_0, P_1, \ldots, P_t$ be a building sequence of $T$.  We
   proceed by induction on $t$. For $t=0$ the statement trivially
   holds.  We assume that $t>0$. Suppose $P_t$ is a 4-path. The
   joint of $P_t$ is red and is a vertex of the skeleton $P_0, P_1, \ldots, P_{t-1}$; thus, by
   induction hypothesis the joint has a neighbor that is black.
   {If $P_t$ is a 6-path, the proof follows} by induction
     hypothesis and as a direct consequence of the coloring of~$P_t$.
\end{proof}

Let $P$ be a building path of a skeleton; that is, a 4-path or a 6-path. We write $P=P' \sqcup \tilde{P}$
if $\{P'$, $\tilde{P}\}$ is a decomposition of~$P$
into two paths of equal length. Note that both, $P'$ and $\tilde{P}$, have as an end vertex the joint of~$P$.

\begin{observation}\label{obs:1}
  {Let $T$ be a skeleton.  
  Then each edge of $T$ belongs to a brrb-path~$P$.}
\end{observation}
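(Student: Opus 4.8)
The plan is to exploit the decomposition of $T$ into its building paths together with Observation~\ref{obs:redblneigh}. First I would record the structural fact that the edge sets $E(P_0), E(P_1), \ldots, E(P_t)$ of the building paths partition $E(T)$: when $P_i$ is attached, only its joint is identified with an existing vertex, so every edge of $P_i$ is incident to a vertex absent from $T_{i-1}$ and is therefore new. Consequently it suffices to prove that every edge of each building path lies on some brrb-path of $T$, and I would split the argument according to the type of $P_i$.

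The two easy cases are $P_0$ and the $6$-paths. The path $P_0 = x_0x_1x_2x_3$ is, by definition, coloured black-red-red-black, so it is itself a brrb-path and all three of its edges are accounted for. If $P_i = x_0\cdots x_6$ is a $6$-path, its colouring is black-red-red-black-red-red-black, and writing $P_i = P'\sqcup\tilde{P}$ at its (black) joint $x_3$ gives $P' = x_0x_1x_2x_3$ and $\tilde{P} = x_3x_4x_5x_6$, each of which is already coloured black-red-red-black; these two brrb-paths cover all six edges of $P_i$.

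The main obstacle is the $4$-path case, because a $4$-path on its own is coloured black-red-red-red-black and hence contains no brrb subpath, so one is forced to leave $P_i$ and use the ambient skeleton. Here I would use that the joint $x_2$ of $P_i = x_0x_1x_2x_3x_4$ is red, so Observation~\ref{obs:redblneigh} supplies a black neighbour $u$ of $x_2$ in $T$. Since the two neighbours of $x_2$ inside $P_i$, namely $x_1$ and $x_3$, are red, we have $u\notin\{x_1,x_3\}$; and because $T$ is acyclic, $u$ cannot coincide with $x_0$ or $x_4$, as either equality would close a cycle through $x_2$. Thus $x_0x_1x_2u$ and $ux_2x_3x_4$ are genuine simple $3$-paths, each coloured black-red-red-black, i.e.\ brrb-paths; the first covers the edges $x_0x_1, x_1x_2$ and the second covers $x_2x_3, x_3x_4$, so again every edge of $P_i$ lies on a brrb-path.

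Combining the three cases covers every edge of every building path, and hence, by the partition fact, every edge of $T$. The only delicate point to check carefully is the $4$-path case, and specifically that the black neighbour $u$ guaranteed by Observation~\ref{obs:redblneigh} is distinct from the vertices of $P_i$ so that the two extended paths are simple; this is exactly where the red colouring of $x_1, x_3$ and the acyclicity of $T$ are used.
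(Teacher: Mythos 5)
Your proof is correct and follows essentially the same route as the paper's: decompose $T$ along a building sequence, note that $P_0$ and the two halves of each $6$-path are already brrb-paths, and for a $4$-path extend each half through the red joint to a black neighbour supplied by Observation~\ref{obs:redblneigh}. The only cosmetic difference is how you certify that this black neighbour $u$ avoids the other vertices of the $4$-path (you use colours and acyclicity, the paper uses that $u$ lies in the earlier tree $T_{i-1}$), and both arguments are fine.
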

\begin{proof}Let $\lambda$ be the coloring of $T$ and $P_0, P_1, \ldots, P_t$ be a building sequence of $T$ such that
  $P_i=P'_i \sqcup \tilde{P}_i$.  Let $e \in E(T)$. If $e \in E(P_0)$, then $P=P_0$. 
   If $e \in P_i$ for some $i$, and $P_i$ is a 6-path, then $P \in
  \{P'_i, \tilde{P}_i\}$.  If $P_i$ is a 4-path, {according to
    the proof of Observation~\ref{obs:redblneigh}, the joint $v$ of
    $P_i$ is adjacent to a vertex $u$ (that belongs to the skeleton $P_0, P_1,  \ldots, P_{i-1}$)  such that $\lambda({u})$
    is black.} In this case, $P\in \{P'_i\cup vu, \tilde{P}_i\cup vu\}$.
\end{proof}

The following is a key property that will be used many 
times throughout this paper. 

\begin{proposition}\label{prop:fromanypath}
  Let $T$ be a skeleton.  
  For every brrb-path $P$
  in $T$, there is a
  building sequence of~$T$ that starts at~$P$. 
\end{proposition}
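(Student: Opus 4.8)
The plan is to argue by strong induction on the number of edges of $T$ (equivalently, on the length $t$ of a building sequence $P_0,\dots,P_t$). The base case $t=0$ is immediate: then $T$ is a single $3$-path, the unique brrb-path of $T$, so the one-term sequence consisting of $P$ works. For the inductive step I would first record how the building paths sit inside $T$. Since each $P_i$ with $i\ge 1$ is glued to $T_{i-1}$ only along its joint, the building paths are pairwise edge-disjoint, they cover $E(T)$, and any two of them share at most one vertex, namely the joint of the one added later. Hence the \emph{attachment graph} $\mathcal{T}$, whose nodes are the building paths and whose edges join two building paths that share a vertex, is a tree (a cycle in $\mathcal{T}$ would produce a cycle in $T$). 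Rooting $\mathcal{T}$ at $P_0$, a building path $P_j$ is a leaf of $\mathcal{T}$ precisely when no later path is attached inside it, and for such a leaf the vertex shared with its unique neighbour in $\mathcal{T}$ is exactly the joint of $P_j$. Consequently, deleting $E(P_j)$ yields a skeleton whose building sequence is $P_0,\dots,P_t$ with $P_j$ omitted, and $P_j$ may be re-appended last at its joint.

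The clean case is when some leaf $Q\neq P_0$ of $\mathcal{T}$ is edge-disjoint from $P$. Then $T':=T-E(Q)$ is a skeleton, and since deleting a building path alters every vertex degree by an even amount, Observation~\ref{obs:black-red} guarantees that the vertices of $P$ keep their colours, so $P$ is still a brrb-path of $T'$. By the induction hypothesis $T'$ admits a building sequence starting at $P$, and appending $Q$ at its joint — which survives in $T'$ as a vertex of $Q$'s neighbour in $\mathcal{T}$ — produces the desired building sequence of $T$. Since $\mathcal{T}$ has at least two leaves and at most one of them is $P_0$, a removable leaf always exists; the entire difficulty is therefore to secure one that avoids the three edges of $P$.

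The main obstacle is exactly the situation in which every leaf of $\mathcal{T}$ other than $P_0$ shares an edge with $P$. This can genuinely occur when $\mathcal{T}$ is path-like, with $P_0$ at one end and the building paths meeting $P$ concentrated at the other, so no reordering of the \emph{given} building paths suffices. To handle it I would perform a local re-cutting surgery rather than a mere reordering, exploiting two reversible moves. First, a $6$-path building path splits at its (black) joint into two brrb-paths, and conversely two brrb-paths sharing a common black endpoint merge into a $6$-path with that vertex as its joint. Second, a $4$-path splits into two brrb-paths only after each half is extended to the black neighbour of the joint furnished by Observation~\ref{obs:redblneigh}; this is precisely the device already used in the proof of Observation~\ref{obs:1}.

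Using these moves on a building path that meets $P$, I would re-cut $E(T)$ into a different legal family of building paths in which the portion of $T$ lying on the far side of $P$ is carved off as a building path disjoint from $P$, thereby exposing a removable leaf that avoids $P$ and reducing to the clean case; in the extremal configurations the same surgery installs $P$ directly as the base $3$-path $T_0$. Verifying that these recombinations always produce legal $4$- and $6$-paths carrying the correct black-red pattern, and that the resulting decomposition is again a valid skeleton building sequence consistent with Definition~\ref{def:ske}, is the technical heart of the argument and the step I expect to be the hardest.
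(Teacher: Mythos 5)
Your overall strategy (induction on $|E(T)|$, reduction to the case where $P$ meets a removable building path, then a re-cutting surgery based on merging two brrb-paths at a common black vertex into a $6$-path and on the $4$-path-plus-extension device) is the same as the paper's, and the two moves you name are exactly the ones the paper uses. But the proposal has a genuine gap: the case you yourself call ``the technical heart'' --- when every removable leaf shares an edge with $P$ --- is only announced as a plan (``I would perform a local re-cutting surgery\dots''), not carried out, and that is precisely where all the content of the proof lives. The paper sidesteps the difficulty you create for yourself by never searching for a leaf edge-disjoint from $P$: it always removes the \emph{last} path $P_t$ of the given building sequence. Because $P_t$ is added last, its end vertices have degree~$1$ and its internal non-joint vertices have degree~$2$ in $T$, so a brrb-path $P$ with $E(P)\cap E(P_t)\neq\emptyset$ is forced to be either one of the two halves of a $6$-path $P_t$, or a half of a $4$-path $P_t$ extended by a single edge $vu$ into $T'=T-E(P_t)$. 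In each of these two configurations the surgery is one explicit step: Observation~\ref{obs:1} supplies a brrb-path $P'$ of $T'$ ending at the joint $v$ (resp.\ containing the edge $vu$), the induction hypothesis gives a building sequence of $T'$ starting at $P'$, and the new building sequence of $T$ is $P$, followed by the $6$-path $P'\cup\tilde{P}_t$ (resp.\ the $4$-path $[P'-vu]\cup\tilde{P}_t$) with joint $v$, followed by the rest. Your version, which removes an arbitrary leaf of your attachment graph, loses this degree control, so the configurations of $P$ relative to the removed path are not classified and the verification you defer is genuinely harder than what the paper has to do.

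A secondary error: your claim that the attachment graph $\mathcal{T}$ is a tree because ``a cycle in $\mathcal{T}$ would produce a cycle in $T$'' is false. Three building paths can pairwise share a single vertex (for instance two $4$-paths whose joints are both identified with the same red vertex of $T_0$), which gives a triangle in $\mathcal{T}$ with no cycle in $T$. This does not by itself sink the strategy, but it means even your ``clean case'' needs a different justification for the existence of a leaf whose deletion leaves a valid building sequence.
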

\begin{proof}
  We proceed by induction on $|E(T)|$. In case that $T$ is a 3-path the
  result is trivial.  Assume $|E(T)|>3$ and let $P_0, P_1, \ldots,
  P_t$ be a building sequence of $T$.  By induction hypothesis, the
  statement holds for the skeleton $T'$ with building sequence
  $P_0, P_1, \ldots, P_{t-1}$.  Therefore, it
  suffices to prove the statement in the case that $P$ is a brrb-path of
  $T$ such that $E(P)\cap
  E(P_{t})\neq \emptyset$; otherwise, the result follows
  directly from the induction hypothesis.

  We suppose first that $P_{t}=P'_{t}\sqcup\tilde{P}_{t}$ is a 6-path.
  Then, $P$ is either $P'_{{t}}$ or $\tilde{P}_{{t}}$.  Without loss of
  generality, let $P=P'_{{t}}$. Let $v$ denote the joint of
  $P_{{t}}$.  Given that $v$ is a black vertex in~$T'$, by
  Observation~\ref{obs:1}, $T'$ contains a $3$-path $P'$ with
  end vertex $v$. By induction hypothesis, $T'$ has a building
  sequence that starts at $P'$, say $P', P'_1, \ldots,
  P'_{t-1}$. Hence, $P'_{t}, (P'\cup \tilde{P}_{t}), P'_1,
  \ldots, P'_{t-1}$ is a building sequence of $T$.

  Secondly, we suppose that ${P}_t=P'_t\sqcup\tilde{P}_t$ is a
  4-path and let $v$ be its joint.  Without loss of generality,
  we can assume that $P = P'_t \cup vu$, for some neighbor $u$ of
  $v$ in $T'$. As before, by Observation~\ref{obs:1} and induction
  hypothesis, there exists a brrb-path $P''$ containing the edge
  $vu$, and furthermore
  $P''$ is the starting path of a building sequence of $T'$, say 
\hbox{$P'', P'_1, \ldots, P'_{t-1}$}.  Hence, $(P'_t\cup vu),
  ([P''-vu]\cup \tilde{P}_t), P'_1, \ldots, P'_{t-1}$ is a building
  sequence of~$T$.
\end{proof}

Finally, we prove that trees in $\mathcal{G}$ are skeletons or have a $4$-pc~decomposition.
We first make the following observation.

\begin{observation}~\label{obs:forprop1}
Let $T$ be a skeleton and $u$ be an odd degree vertex of $T$.
By Observation~\ref{obs:1} and Proposition~\ref{prop:fromanypath}, 
there exists a brrb-path $P$ with end vertex $u$ 
such that $P$ is the starting path of a building sequence of $T$.
Therefore, the graph $T \cup uv$, where $v$ is a vertex not in $V(T)$,
has a 4-pc~decomposition, which is given by $P\cup uv$ and the set of paths
of the aforementioned building sequence of $T$.
\end{observation}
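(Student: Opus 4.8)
The plan is to assemble two earlier results into an explicit decomposition: Observation~\ref{obs:1} and Proposition~\ref{prop:fromanypath} supply a brrb-path ending at $u$ together with a building sequence of $T$ that begins with it, and the building sequence, being an edge-partition, is almost a $4$-pc decomposition already---its only defect being the initial $3$-path, which the new edge $uv$ will repair.

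First I would pin down the brrb-path. Since $u$ has odd degree, Observation~\ref{obs:black-red} gives $\lambda(u)=\text{black}$. The vertex $u$ is incident to at least one edge $e$, and by Observation~\ref{obs:1} this edge lies on a brrb-path $P$. A brrb-path is colored black-red-red-black, so its black vertices are exactly its two ends; as $u$ is black, $u$ is an end vertex of $P$. Proposition~\ref{prop:fromanypath} then yields a building sequence $P=P_0,P_1,\ldots,P_t$ of $T$ that starts at $P$.

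The structural point I would rely on next is that a building sequence is an edge-partition of $T$: by Definition~\ref{def:ske}, each $T_i$ is obtained from $T_{i-1}$ by gluing $P_i$ solely at its joint, so the sets $E(P_0),\ldots,E(P_t)$ are pairwise disjoint and together form $E(T)$. For $i\ge 1$ each $P_i$ is a $4$-path or a $6$-path and hence already has length at least $4$; the lone short element is $P_0=P$, of length $3$.

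Finally, the edge $uv$ fixes this. Because $v\notin V(T)$ and $u$ is an end vertex of $P_0$, the graph $P_0\cup uv$ is a path of length $4$ and introduces no cycle. Thus $P_0\cup uv,\,P_1,\ldots,P_t$ is a collection of pairwise edge-disjoint paths covering $E(T\cup uv)$, each of length at least $4$; this is the claimed $4$-pc decomposition of $T\cup uv$. Since Observation~\ref{obs:1} and Proposition~\ref{prop:fromanypath} carry the real structural weight, there is no serious obstacle here: the only point deserving attention is that the odd degree of $u$ forces it to be a \emph{black} end vertex of a brrb-path, rather than an interior red vertex, which is precisely what makes the extension $P_0\cup uv$ a legitimate $4$-path.
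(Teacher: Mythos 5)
Your proof is correct and follows the same route the paper takes: the observation's own justification is exactly the citation of Observation~\ref{obs:1} and Proposition~\ref{prop:fromanypath}, with the extension $P\cup uv$ turning the initial $3$-path of the building sequence into a $4$-path. You merely make explicit the details the paper leaves implicit (odd degree forces $u$ to be a black end vertex of a brrb-path, and the building sequence is an edge-partition whose only short element is $P_0$), which is fine.
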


\begin{lemma}\label{theo:hangtrees}
 Let $T$ be a tree in $\mathcal{G}$. Then $T$  has a 4-pc~decomposition
 or is a skeleton.
\end{lemma}
\begin{proof}
If $T$ is a minimum counterexample to Lemma~\ref{theo:hangtrees}, then the following two properties hold.
\begin{description}
\item{\emph{Claim~1}:} 
 \textsl{$T$ contains no leaf $v$ such that the minimum distance from
    $v$ to any other odd vertex of $T$ is at least~4.}  
    
    If
  such a leaf $v$ exists, then the tree $T-vu$, where $u$ is the only
  neighbor of $v$, has odd distance at least~3. By the minimality of
  $T$, we conclude that $T-vu$ has a $4$-pc~decomposition or is
  a skeleton.  In the former case, since $u$ is odd in $T-vu$, we
  can trivially extend the $4$-pc~decomposition of $T-vu$ to one of
  $T$, a contradiction. If $T-vu$ is a skeleton, by
  Observation~\ref{obs:forprop1},  $T$ has a $4$-pc~decomposition, again a contradiction.


\item{\emph{Claim~2}:} 
\textsl{$T$ contains no path $P$ of length at least~4 such that
    $T-E(P)$ is connected and both leaves of $P$ have degree~1 in
    $T$.}  
    
    If such a path $P$ exists, then $T-E(P)$ is a tree
  of odd distance at least~3. If $T-E(P)$ has a 4-$pc$ decomposition,
  clearly $T$ also has such a decomposition, a contradiction. Thus, by
  the minimality of~$T$, we conclude that $T-E(P)$ is a skeleton,
  in which case, by Lemma~\ref{lemma:treepath}, we have that $T$ has a $4$-pc~decomposition, a contradiction.
  
\item{\emph{Claim~3}:} 
\textsl{If $P$ is a 3-path with odd end vertices, then $T-E(P)$ has a {{$4$-pc~decomposition}}
and is connected.}  

Let $P=v_0v_1v_2v_3$. The graph $T-E(P)$ has at most $4$~components, say
{$H_0$, $H_1$, $H_2$ and $H_3$,} containing $v_0$, $v_1$, $v_2$, and $v_3$,
respectively. Set $H_i=\emptyset$ if $H_i$ does not exist.
Notice that none of the $H_j$ can be a skeleton because the
distance between odd vertices needs to be at least 3. To see
this, notice that in $H_i$ the vertex $v_i$ has even degree. If $H_i$ is a
skeleton, then $v_i$ has a neighbour $u_i$ in $H_i$ of odd degree (by
Observation~\ref{obs:redblneigh}), but then $u_i$ has distance at most 2 from $v_0$ or $v_3$.
Hence, every component has a $4$-pc~decomposition.  
If there are at least two such components, then {$P \cup H_i$ is a skeleton} 
for each $i\in \{0,1,2,3\}$; this follows because 
$P \cup H_i$ has {fewer} edges than $T$ and if it had a $4$-pc~decomposition, 
then $T$ would have a $4$-pc~decomposition as well, a contradiction.  
But this means that $P \cup H_0 \cup H_1 \cup H_2 \cup H_3$ is a skeleton. 
Thus, there must be only one component.  
\end{description}

Let $v_0$ be a leaf of $T$ and $v_3$ be an odd vertex of $T$ at
distance~$3$ from $v_0$ in $T$ (by \emph{Claim~1}, such vertices always exist).  Let $P'=v_0v_1v_2v_3$ denote the
$3$-path of $T$ with end vertices $v_0,v_3$ and let $H_i$ with $i\in\{1,2,3\}$ the only component
of $T-E(P')$ which is not empty, as in \emph{Claim~3}. 
Due to Claim~3, $H_i$ has a $4$-pc~decomposition.

Let $\mathcal{D}$ be a $4$-pc~decomposition of $H_{i}$ such that the paths containing $v_i$ are as long as possible.
Because of maximality, every such path ends in odd vertices of $T$.
If $i=1$ (analogously for $i=2$), 
then every path in $\mathcal{D}$
containing $v_1$ is of length 4 and has~$v_1$ as its middle vertex, 
this is because $T$ does not have a 4-pc decomposition. Let $x_0x_1x_2x_3x_4$ denote such a path. 
Due to Claim~3, $x_0$ and $x_4$ are leaves of $T$ 
and the degree of its inner vertices distinct of $v_1$, namely of $x_1$ and $x_3$, is two. 
But then, we obtain that the path $x_0x_1v_1v_2v_3$ violates \emph{Claim~2}. 
For the case $i=3$, note that again due to that $T$ does not have a 4-pc~decomposition,
every path in $\mathcal{D}$
that contains $v_3$ is of length 6 and has~$v_3$ as its middle vertex. Let $x_0x_1x_2vy_2y_1y_0$ denote such a path.
Due to Claim~3, the end vertices $x_0$ and $y_0$ are leaves of $T$ and the degree of the inner vertices distinct from $v_3$, namely
of $x_1, x_2, y_1$ and $y_2$, is two. Then, the path $P'\cup x_0x_1x_2v$ violates \emph{Claim~2}.
\end{proof}

\subsection{Minimum counterexample argument}\label{subsec:mincount}

In this subsection we study the properties of a minimum counterexample to Fact~1.  
Due to Lemma~\ref{theo:hangtrees}, we already know that 
a minimum counterexample has cycles.  
As we already mentioned, the proof of this theorem is based on the results {that will be presented in} 
Section~\ref{sec:proph-sgraphs}.  Recall that $\mathcal{G}$ denotes the set of all connected, triangle-free 
graphs with odd distance at least 3.  From now on, we may denote the number of edges of a graph $G$ by $\ell(G)$.

\begin{theorem} \label{prop:tree}
Let $G \in \mathcal{G}$. If $G$ is not a hanging-square graph, then $G$ has a $4$-pc decomposition.
\end{theorem}
\begin{proof}
For the purpose of contradiction, let $G\in \mathcal{G}$ be an edge-minimum graph that is not a hanging-square 
  graph and does not have a $4$-pc decomposition.
Due to Lemma~\ref{theo:hangtrees}, $G$ has a cycle.
Since $G$ does not
  have a $4$-pc~decomposition, for every cycle $C$ of $G$, the graph
  $G-E(C)$ consists of at least one component, and by the minimality of $G$ at least one
  component of $G-E(C)$ is a hanging-square graph.
  Recall that if $E' \subset E(G)$, we denote by $G-E'$ the graph obtained from $G$ by first removing
  the edges from $E'$ and then deleting all isolated vertices.
  We analyze two cases. \\

\noindent
\textit{Case 1.} There exists a cycle $C$ in $G$ such that $G-E(C)$ has
exactly one component.

Let $H=G-E(C)$. By the observation above, $H$ is a hanging-square graph.  We claim that $C$ has length~$4$. 
Indeed, if the length of $C$ is at least~5, by 
{Lemma~\ref{lemma:length5}} we have that $G=C\cup H$ has a $4$-pc~decomposition, a contradiction.

Suppose that there exists a square $Q$ in $H$ (such a square is part of a bunch or it is a bunch itself).  We have that $G-E(Q)$
either has exactly one component or, is formed by two
components, namely, $C$ and $H-E(Q)$.  In the case that
$G-E(Q)$ is formed by two components, by Lemma~\ref{lemma:length4} we
have that $G$ has a $4$-pc~decomposition, again a contradiction.
In the case that $G-E(Q)$ has one component,
say $H'$, given that $G$ does not have a $4$-pc decomposition, we have
that $H'$ is a hanging-square graph, but then by
Lemma~\ref{lemma:2cycles} there exists a $4$-pc~decomposition of~$G$, a contradiction. 

Thus, we conclude that $H$ has no squares, and therefore $H=T_H$ (that is, $H$ is a skeleton).  
If $H$ is a 3-path, then $G = H \cup C$ is a hanging-square graph, a contradiction to
our assumption. Let $P$ be the last path in a building sequence of $T_H$. Since $P$ has
length 4 or 6 and $G$ has no $4$-pc decomposition, we have that also $G-E(P)$ has
no 4-pc decomposition. By minimality of $G$, the graph $G-E(P )$ either has two
connected components, namely $H-E(P)$ and $C$, or $G-E(P)$ is a hanging-square
graph.
Hence, by Lemma~\ref{lemma:main} we have that $G=H\cup C$ has a $4$-pc~decomposition, again a contradiction.  Summarizing, we conclude
that \emph{Case 1} leads to a contradiction.\\

\noindent
\textit{Case 2.} The deletion of the edge set of any cycle 
gives at least two components. 

Let $C$ be a cycle of $G$ of minimum length {and let $H=G-E(C)$.}

We first suppose that one of the components of $H$, say
$K$, is not a hanging-square graph. By the minimality of $G$,
the graph~$K$ has a $4$-pc~decomposition.  Let $H'= G- E(K)$.  Since $G$
is a minimum counterexample, $H'$ is a hanging-square graph.  
Moreover, since $H'-E(C)$ is connected, $H$ has exactly two components, namely $K$ and $H'-E(C)$,
and since $H'$ is a hanging-square graph, $C$ is a square of $H'$.
In addition,  given that $C$ disconnects $G$, we have $V(K) \cap V(H') \subset V_c $, where $ V_c= V(C)\setminus V(T_{H'})$.
Let us consider a $4$-pc~decomposition of $K$ such that all its paths
have length at most~$7$.  Let $D$ be an element
of such a $4$-pc~decomposition that contains a vertex of $C$.  {If
  $D$ is a cycle, then by Lemma~\ref{lemma:length4} we have that
  the graph $H' \cup D$ has a $4$-pc~decomposition. If
  $D$ is a path, then Lemma~\ref{lemma:element} provides
 a $4$-pc~decomposition of  $H' \cup D$; both cases yield a contradiction.}

 Hence, we conclude that all 
components of $H$ are hanging-square graphs.  

Let $\{H_i\}_{i \in
  [k]}$ denote the set of all components of $H$.
We first assume  that for some $i \in [k]$, the graph $H_{i}$ has a
square $Q$. By assumption (Case 2), the deletion of $E(Q)$ yields at least $2$~components,
thus, $V(H_{i}) \cap V(C) \subset V_q$, where {$ V_q=
  V(Q)\setminus V(T_{H_i})$} and $H_i$ does not contain further squares.

Let $P_0$ be a brrb-path such that $|V(P_0) \cap V(Q)|$ is maximum;
  if $Q$ is a $2$-bunch we let $P_0$ be the brrb-path occupied by $Q$,
  otherwise, we just let $P_0$ be any of the brrb-paths that {contain} the joint 
  of $Q$.
Suppose that $T_{H_i}$ has a building sequence starting at $P_0$ and ending at $P$ such that $P_0 \neq P$.  
Since $V(H_{i}) \cap V(C) \subset V_q$, the graph $G-E(P)$ is connected and,
{because of our assumption, it} is not a hanging-square graph.  By the
minimality of $G$, the graph $G-E(P)$ has a $4$-pc~decomposition. But this yields
a $4$-pc~decomposition of $G$, a contradiction.
Therefore, $H_i$ is the union of $P_0$ and a square~$Q$
and $G-E(Q)$ has exactly two components, $P_0$
 and $\tilde{H}$ (with $C$ a cycle in $\tilde{H}$). 
 If $\tilde{H}$ is not a hanging-square graph,
 then we proceed as in the previous argument (with $K=\tilde{H}$) to obtain 
 a $4$-pc~decomposition of $G$ and thus, a contradiction. 
 Therefore, we can assume that $\tilde{H}$ is a hanging-square graph and $C$ is a square of $\tilde{H}$. 
 Using the same argument as before, $\tilde{H}$ is the union of a $3$-path
 $\tilde{P}$ and a square~$C$.
 By Lemma~\ref{lemma:2basis}, we have 
 that there is a $4$-pc~decomposition of $H_i \cup \tilde{H}$ and therefore of~$G$, a contradiction.

 \smallskip
 
Finally, {we are left with the} case that $H_i$ is a skeleton for each $i \in [k]$.  
Suppose that for some $i \in [k]$, $H_i$ is not a $3$-path.
 Let $P$ be the last path of a building sequence of $H_i$.  If $(H_i \cup C)
 - E(P)$ is connected, then $G-E(P)$ is connected and is not a
 hanging-square graph. Thus, $G-E(P)$ has a $4$-pc~decomposition, and
 so does $G$, a contradiction. Then, $(H_i \cup C) - E(P)$ is not
 connected for all such paths.  This implies that $V(C)\cap V(H_i)
 \subset V(P)\setminus\{v\}$, where $v$ is the joint of $P$. Now, let $v' \in
 V(C)\cap V(H_i)$ and $P_0, P_1, \ldots, P_t$ be a building sequence of
 $H_i$ such that $v' \in P_0$. Since $v' \notin V(P_t)\setminus \{v''\}$, where $v''$
 is the joint of $P_t$, we have that $(H_i \cup C) - E(P_t)$ is
 connected.  This contradicts the previous assertion.

{Thus, for every $i \in [k]$, $H_i$ is a $3$-path.}
 Note that the degree in $G$ of a vertex in $C$ is in $\{2,3,4\}$.
 {Let $x_{1}, x_{2}, \ldots x_{t}$ be the subsequence of vertices
   of (the sequence that defines) $C$ that have degrees in $\{3,4\}$.}
 Assume first that every component intersects $C$ in exactly one
 vertex, and denote by $H_{i}$ the $3$-path that contains vertex
 $x_{i}$.  Furthermore, denote by {$C(i,j)$ a path in $C$ with {end vertices}
   $x_i$, $x_j$}.  We need one more notation: we denote by $L_{i}$
 and $S_{i}$ the longest and the shortest, respectively, {subpaths} 
 in $H_{i}$ with end vertex $x_{i}$.  We decompose $G$ into paths~$P'_{i}, i
 \in [t]$, where $P'_{i}= L_{i} \cup C(i,i+1) \cup S_{{i+1}}$ for
 all $i \in [t-1]$ and $P_{t}= L_{t} \cup C(t,1) \cup S_{{1}}$.
From next observation follows that the length
 of each path $P_{i}, i \in [t]$, is at least~4.
\begin{observation}\label{obs:property}
If $v$ is a vertex of degree 3 in~$C$, then its neighbors in $C$ have degree 2, since otherwise
there exists an odd vertex within distance 2 of $v$.
\end{observation}

{It remains to analyze the case that there is a $brrb$-path
  $\tilde{H}$ that intersects $C$ in at least 2~vertices. If
$C$ contains two adjacent vertices of $\tilde{H}$, then there exists in $G$
a cycle shorter than~$C$, a contradiction.  If $C$ contains an
inner and an end vertex of $\tilde{H}$, then $\tilde{H}\cup C$ is a
hanging-square graph, because in this case the length of $C$ is
necessarily~4 (otherwise there would be a shorter cycle). Let $v$ be
a vertex of $C$ that has degree~2 in $\tilde{H}\cup C$ (since $C$ has length~4, there are two such vertices), 
by Observation~\ref{obs:property}
the vertex $v$ has degree~2 in $G$ as well. Then, $G=\tilde{H} \cup C$, a
contradiction.  If $C$ contains both ends of $\tilde{H}$, then necessarily
$C$ has length~6 and again by Observation~\ref{obs:property} we have that $G=\tilde{H}\cup
C$, a contradiction, as this implies that $G$ has a
$4$-pc~decomposition.  This completes the proof that $G$ is a tree.}
\end{proof}


\subsection{Hanging-square graphs do not have 4-pc decompositions}\label{subsec:no4pc}
In this section we complete the proof of Theorem~\ref{theo:main}. 
{
\begin{proposition}\label{prop:hsgrhno4}
 If $H$ is a hanging-square graph, then $H$ does not have a 4-pc~decomposition.
\end{proposition}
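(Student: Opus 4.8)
The plan is to argue by induction on the number of edges $\ell(H)$, assuming toward a contradiction that $H$ is an edge-minimum hanging-square graph that does admit a $4$-pc decomposition $\mathcal{D}$. The one general fact I will use repeatedly is a parity count at each vertex: in any decomposition into paths and cycles, a cycle or a path passing through $v$ in its interior consumes two edges at $v$, whereas a path ending at $v$ consumes exactly one; hence if $p(v)$ denotes the number of elements of $\mathcal{D}$ having $v$ as an endpoint, then $p(v)\equiv \deg_H(v)\pmod 2$. In particular every odd vertex of $H$ is an endpoint of at least one (in fact an odd number of) path(s) of $\mathcal{D}$, and by Observation~\ref{obs:black-red} the odd vertices of $H$ are precisely the black vertices of its skeleton $T_H$.

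First I would dispose of the bunches, reducing to the case $H=T_H$. If $\mathcal{B}\neq\emptyset$, choose a maximal bunch $B$ and note that each non-joint vertex of a square of $B$ has degree exactly $2$ in $H$ (by condition (i) of Definition~\ref{def:h-sss} such vertices are not shared), so at each of them the two square-edges are traversed consecutively by the unique element of $\mathcal{D}$ containing them. Using the structural results of Section~\ref{sec:proph-sgraphs} — together with condition (iii), which pins down the degrees at the relevant brrb-path vertices — one reroutes $\mathcal{D}$ so that the squares of $B$ are realized as $4$-cycles; deleting these leaves a $4$-pc decomposition of $H-E(B)$, which is again a hanging-square graph (same skeleton, one fewer bunch) with fewer edges. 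This contradicts minimality, so we may assume $\mathcal{B}=\emptyset$ and $H=T_H$ is a skeleton.

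For the skeleton case I would induct along a building sequence, exploiting Proposition~\ref{prop:fromanypath} to start the sequence at a brrb-path through a convenient leaf and then examining the last building path $P_t$. Its two black endpoints are leaves of $T_H$, hence, by the parity count, each is the endpoint of exactly one path of $\mathcal{D}$; since the red vertex adjacent to such a leaf has degree $2$ (Observations~\ref{obs:redblneigh} and~\ref{obs:1} describe the local brrb-structure), any path of $\mathcal{D}$ ending at that leaf is forced through the joint of $P_t$. Comparing, at the joint, how many edges of $P_t$ these forced paths can absorb against the parity requirement there, I would show that erasing $E(P_t)$ from $\mathcal{D}$ yields a $4$-pc decomposition of the smaller skeleton $T_{t-1}$ — contradicting the inductive hypothesis — unless the erasure leaves behind an element of length at most $3$, contradicting that $\mathcal{D}$ is a $4$-pc decomposition. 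Either way we reach a contradiction, completing the induction.

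I expect the skeleton step, and within it the $6$-path subcase, to be the main obstacle. A $6$-path contributes length $6=4+2$, i.e.\ two units of ``slack'', together with a black branch-joint, and a naive edge count (the graph has $2(t+1)$ odd vertices, forcing at least $t+1$ paths of total length at least $4(t+1)$, against $\ell(T_H)=3+4a+6b$, where $a,b$ count the $4$- and $6$-paths of the building sequence) does not rule out a decomposition once $6$-paths are present. The content is therefore to show this slack can never be used: as in the spider where three length-$3$ legs meet at a single odd vertex, the odd vertices created by the construction are pinned at branch points whose every continuation is too short, and turning this local obstruction into a clean inductive invariant — using that every edge lies in a brrb-path (Observation~\ref{obs:1}) and that odd vertices are pairwise at distance at least $3$ — is the delicate part of the argument.
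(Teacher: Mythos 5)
Your setup (minimal counterexample, parity of path-ends at each vertex, reduction to the skeleton, peeling off the last building path) is reasonable, but both of your reduction steps have the same unproved core, and it is precisely the hard part of the statement. When you delete a bunch $B$ (or the last building path $P_t$) you assert that $\mathcal{D}$ can be ``rerouted'' or ``erased'' so as to leave a $4$-pc decomposition of the smaller hanging-square graph $H-E(B)$ (resp.\ $T_{t-1}$). This fails in general: an element of $\mathcal{D}$ may use, say, two edges of $P_t$ (entering through a degree-$2$ red vertex and passing the joint) and then continue into $T_{t-1}$, so that erasing $E(P_t)$ leaves a remnant of length $2$ or $3$; the resulting family is then simply not a $4$-pc decomposition of $T_{t-1}$, and no contradiction is obtained. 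Your escape clause --- ``unless the erasure leaves behind an element of length at most $3$, contradicting that $\mathcal{D}$ is a $4$-pc decomposition'' --- is a non sequitur: the short remnant belongs to the erased family, not to $\mathcal{D}$, so nothing is contradicted. The same issue afflicts the bunch step (an element can traverse a half-square $a x_i b$, end at $b$ or combine half-squares of different squares of the bunch, and the part of it outside $B$ may be short after deletion); invoking ``the structural results of Section~\ref{sec:proph-sgraphs}'' does not fill this in, since those lemmas construct decompositions of unions $H\cup C$, $H\cup P'$, etc., rather than controlling how an arbitrary given decomposition meets a bunch. Your closing paragraph in effect concedes that the $6$-path ``slack'' is the real content and that you do not have the invariant that kills it.

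For comparison, the paper attacks exactly this straddling problem directly. Its key step (Claim~\ref{claim:bbredge}) shows that for every brrb-path $P$ and every edge $e\in E(P)$, one component of $H-e$ has at most $3$ edges: otherwise both sides together with $P$ are smaller hanging-square graphs, and a careful case analysis of how the path $P(e)\in\mathcal{D}$ covering $e$ splits across the cut produces a $4$-pc decomposition of one of them, contradicting minimality. This claim forces a drastic global structure --- every building path, $1$-bunch and $2$-bunch must share a single joint $v\in\{x_0,x_2\}$ of one brrb-path --- after which short final arguments (a parity count on $|E(H)|$ when no such $v$ exists, and bounds on the longest path through $x_2x_3$ or ending at $x_0$ otherwise) finish the proof. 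Without an analogue of Claim~\ref{claim:bbredge}, or some other mechanism ruling out decomposition elements that straddle the piece you remove, your induction does not close.
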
}
\begin{proof} Let $H$ be a minimum counterexample to the statement.
Let $T_H$ denote the skeleton of~$H$ and $\mathcal{B}_i$ denote
the set of (maximal) $i$-bunches of squares of $H$ for $i \in \{1,2\}$ (see Definition~\ref{def:h-sss}).
Let $\mathcal{D}$ be a $4$-pc~decomposition of $H$.  Clearly, $|E(H)|>3$. 

We note that if $\mathcal{D}$ contains a cycle $C$, then $\mathcal{D}\setminus\{C\}$ is a 
$4$-pc~decomposition of $H-E(C)$, which is a hanging-square graph, a
    contradiction to the minimality of $H$.  Thus, we assume that
    $\mathcal{D}$ consists only of paths.  
    
Recall that by Proposition~\ref{prop:fromanypath}, for each brrb-path $P$ of $T_H$,
there exists a building sequence of $T_H$ that starts at $P$.
We first prove two claims:
\begin{claim}\label{claim:bbredge}
For every $e \in E(T_H)$,
either $H-e$ is connected or one of the components of $H-e$ has at most 2 edges.
\end{claim}\noindent
\emph{Proof of Claim~\ref{claim:bbredge}.}
By Observation~\ref{obs:1}, there exists a brrb-path $P$ which contains $e$.
By contradiction, suppose that the components $H'$, $H''$ of $H-e$ satisfy   
\begin{equation}\label{eq:prolema}
{|E(H')| \geq 3 \,\, \text{and} \,\,  |E(H'')| \geq 3.}
\end{equation}
We show that this situation yields a contradiction. We have that $H' \cup P$ and $H'' \cup P$
are hanging-square graphs with {fewer} than $|E(H)|$ edges. Then, by the minimality
of $H$, neither $H' \cup P$, nor $H'' \cup P$ has a $4$-pc~decomposition.
Let $P(e) \in \mathcal{D}$ denote the path that covers~$e$; recall that
$\mathcal{D}$ is a $4$-pc~decomposition of $H$ which consists only of paths. 
If $H' \cup P$ contains at least 4~edges from $P(e)$, then the restriction
of $\mathcal{D}$ to $H' \cup P$ is
a $4$-pc~decomposition of $H' \cup P$, a contradiction.
Then, we can assume that each of the hanging-square graphs $H' \cup P$ and $H'' \cup P$
have at most~3 edges from $P(e)$.
Let $P=x_0x_1x_2x_3$ and suppose that $e=x_0x_1$, $H'$ contains $x_0$ and $H''$ contains $x_1, x_2, x_3$.
{ If no edge of $P(e)$ belongs to $E(H')$, then $H'' \cup P$ contains
  all edges of $P(e)$, a contradiction. Therefore, at least one edge
  from $P(e)$ belongs to $E(H')$.  It implies that the restriction of
  $\mathcal{D}$ to $H'$ consists of a set of paths of length at least~4, 
  say $\mathcal{P}'$, and a path $Q$ of length at least 1 with end
  vertex $x_0$.  Therefore $\mathcal{P}' \cup \{Q\cup P\}$ is a
  $4$-pc~decomposition of $H' \cup P$, a contradiction to the minimality of $H$.  {So, we
    now assume} that $e=x_1x_2$, $H'$ contains $x_0$, $x_1$ and $H''$ contains $x_2,
  x_3$. We note that the condition $e=x_1x_2$ implies that $P$ is not occupied at $x_0,x_2$.
  Without loss of generality, suppose that $H'$ has at least 2 edges
  from $P(e)$. Then, the restriction of $\mathcal{D}$ to $H'$ consists
  of a set of paths of length at least~4, say $\mathcal{P}'$, and a
  path $Q$ of length at least~2 with end vertex~$x_1$.  Therefore
  $\mathcal{P}' \cup \{Q\cup \{x_1x_2, x_2x_3\}\}$ is a $4$-pc~decomposition 
  of $H' \cup P$, again a contradiction to the minimality of $H$. 
  } 
$\Box$

\medskip
%

\begin{claim}\label{claim:page13}
If $v \in V(T_H)$ is the joint of a building path or of a 1-bunch,
then every building path of~$H$ and every $1$-bunch of $H$ has joint $v$
and every $2$-bunch has $v$ as a joint. 
\end{claim}
\noindent
\emph{Proof of Claim~\ref{claim:page13}.}
If there are distinct $v, u \in V(T_H)$ which are joints of building paths or of 1-bunches,
then, by definition of hanging-square graphs, there exists an edge $e\in E(T_H)$ so that 
$H - e$ has two components, each with at least 3 edges, 
contradicting Claim~\ref{claim:bbredge}. Similarly, if $v$ is the joint of 
all building paths and 1-bunches and a 2-bunch of $H$ does not have $v$ as a joint, then again
due to the definition of hanging-square graphs, there exists an edge $e$ such that $H - e$ has two components
each with at least 3 edges, a contradiction to Claim~\ref{claim:bbredge}. $\Box$

\medskip

Let $P=x_0x_1x_2x_3$ be a brrb-path of $T_H$. If no vertex of $P$ is the joint of a building path or of a $1$-bunch,
then a longest path in $H$ has
    length at most~4 and hence, $\mathcal{D}$ consists of paths
    of length~4. But this implies that $|E(H)|$ is even, a
    contradiction.

Suppose $x_2$ is the joint of a building path or of a $1$-bunch. 
Let $Q \in \mathcal{D}$ be the path that contains the edge $x_2x_3$.
    We have that $|E(Q)|=4$, since the longest path in $H$ that
    contains $x_2x_3$ has length~4, due to Claim~\ref{claim:page13}. Further, $Q$ contains either edges of a 1-bunch
    or of a $2$-bunch, and in either case the deletion of $E(Q)$ generates a hanging-square graph,
    a contradiction to the minimality of $H$.
   
Finally, suppose  $x_0$ is the joint of a building path or of a $1$-bunch. 
Since $x_0$ is an odd degree vertex and due to Claim~\ref{claim:page13}, there exists a
    path $Q \in \mathcal{D}$ that ends at $x_0$, but the longest such path in $H$ has length~3, a contradiction. 
\end{proof}

%
%
%

\subsection{Proof of Corollary~\ref{coroll:main}}\label{sec:cor2}

%

{Let $P$ be a path and $x,y \in V(P)$. 
We denote by $P(x,y)$ the subpath in $P$ with {end vertices} $x, y$.
Let $C$ be a cycle and $x,y \in V(P)$, with $x\neq y$. 
We denote by $C(x,y)$ one of the two subpaths in $C$ with {end vertices} $x, y$.

We need the following lemma to complete the proof of Corollary~\ref{coroll:main}.

\begin{lemma}\label{JGTcycles}
Let $C$ be a cycle and $D$ be a path or a cycle.
Suppose $C$ and $D$ are of length at least~4 and at most~7.
If $V(C) \cap  V(D) {\neq} \emptyset$, 
$E(C) \cap  E(D)=\emptyset$ and $C \cup D$ is a triangle-free graph
which, in the case that $C$ is a cycle of length 4 and $D$ is a path, 
does not satisfy the following (see Figure~\ref{fig:forbgraphs}):
\begin{enumerate}
 \item[(i)]$|V(C) \cap  V(D)|=1$
 and $V(C) \cap  V(D)$ is an end vertex of $D$,
  \item[(ii)] $|V(C) \cap  V(D)|=2$, and
 $V(C) \cap  V(D)$ contains an end vertex $x$ of $D$ and a vertex at distance two of $x$ in $D$,
\end{enumerate}
then {$C\cup D$} has a decomposition into paths of length at least~4.
\end{lemma}
\begin{figure}[h]
 \centering
\ifpdf\input{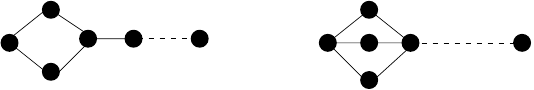_t}\else\input{forbgraphs.ps_t}\fi 
   \caption{The two graphs of Lemma~\ref{JGTcycles}. Dashed lines represent paths.}\label{fig:forbgraphs}
 \end{figure}
\begin{proof}
Note that if $D$ and $C$ are cycles of length 4, then one can easily find a decomposition
into two paths of length 4. We now show that the case that $D$ is a cycle follows
from the case that $D$ is a path. Suppose that $D$ is a cycle of length at least 5.
We claim that $|V(C) \cap V(D)|{\leq}4$. Otherwise, 
there is a path $v_1v_2v_3$ in $C$ such that $v_i \in V(C) \cap V(D)$ for each $i\in\{1,2,3\}$.
Since  $C\cup D$ is triangle-free, each path $D(v_1,v_2)$ and $D(v_2,v_3)$
is of length at least~3, and due to that the length of $D$ is at most 7,
we have that $v_1v_3$ is an edge of $D$, and then, $\{v_1, v_2, v_3\}$ is a triangle of $C \cup D$, a contradiction.
Therefore, there is a vertex $x$ in $D$ of degree two in $C \cup D$. 
Let $e=xy$ be an edge in $D$. Then the path $D':= D-e$ is of length at least 4.
If the graph $C \cup D'$ satisfies condition$~(i)$ or$~(ii)$, then one can easily find the desired
decomposition of $C \cup D$. If $C \cup D'$ does not satisfy$~(i)$ and$~(ii)$,
under the assumption of the validity of the lemma for the cycle path case,
we show that we can find the desired decomposition of $C \cup D$.
Note that vertices $y$ and $x$ are the only odd vertices in $C \cup (D-e)$ and $x$ has degree~1 in $C \cup (D-e)$. 
Let $\mathcal{D}$ be a decomposition of  $C \cup D'$ into paths of length at least~4.
If a path in $\mathcal{D}$ ending in~$y$ does not use the vertex $x$, then we can easily extend $\mathcal{D}$ 
to the desired decomposition of $C \cup D$. 
Now suppose that each path in $\mathcal{D}$ ending in~$y$ uses the vertex $x$ and let $P$ be such path.
We have that $P \cup e$ is a cycle of length at least~5 and that $C \cup D' - E(P)$ is a cycle of length 8 or 9. 
Moreover, $\mathcal{D}\setminus P$ is a decomposition of $C \cup D' - E(P)$ into paths of length 4 or 5. 
Let $P'$ be a path in $\mathcal{D}\setminus P$ that contains vertex $y$. Then, via the assumption there is a  
decomposition $\mathcal{D}'$ into paths of length at least $4$ of the graph $P \cup e \cup P'$. Hence, taking the union 
of $\mathcal{D}'$ and the path in $\mathcal{D}\setminus \{P,P'\}$ we have a decomposition of $C \cup D$
into paths of length at least~$4$.

We assume now that $D$ is a path. In the case that $|V(C)\cap V(D)|=1$, it is not hard to see that,
unless $C\cup D$ satisfies condition~$(i)$, we can obtain the desired path
decomposition. Set $V(C)\cap V(D)=\{u\}$. One of the paths 
of the desired path decomposition of $C\cup D$ is obtained by the union of a longest
subpath $P$ of $D$ with end vertex $u$ and a subpath $P'$ of $C$ with $u$ as an end vertex: if the length
of $P$ is at least~$3$, then take $P'$ consisting of only one edge; and if not, consider $P'$
such that $\ell(P')=\ell(P)=2$; in both cases, since $\ell(P)<\ell(D)$ because
$P\cup D$ does not satisfy~$(i)$, the complement of $P\cup P'$ is a path of length at least~$4$.
Let $V(C)\cap V(D)=\{u,v\}$, let $x,y$ be the end vertices of $D$ such that
$D(x,u)$ does not contain $v$ and let $P_1, P_2, P_3$ be the three subpaths
of $C\cup D$ with end vertices $u, v$.
Without loss of generality, suppose $\ell(D(x,u)) \geq \ell(D(y,v))$ and $\ell(P_1)\leq \ell(P_2)\leq \ell(P_3)$. 
One of the paths of the desired path decomposition of $C\cup D$ is obtained by the union of 
$D(x,u)$, $P_1$ and a subpath $P'$ of $P_3$ with end vertex $v$; the other path is its complement.
Let us check that it is possible to choose a suitable $P'$. 
If $\ell(P_2)\geq 3$ and $\ell(P_3) > 3$, then $P'$ can be chosen so that 
$\ell(P')=3$, and the result follows. If $\ell(P_2)=\ell(P_3)=3$, then
$\ell(D(x,u))\geq 1$, and hence $\ell(D(x,u)\cup P_1)\geq 2$, and the result holds if we choose
$P'$ so that $\ell(P')=2$. 
If $\ell(P_2)= 2 $ and $ \ell(P_3) \geq 3$, then $\ell(P_1) = 2$ and 
again the result holds if we choose
$P'$ so that $\ell(P')=2$. 
Finally, if $\ell(P_2)= \ell(P_3) = 2$, then $\ell(P_1) = 2$ 
and $\ell(D(x,u)) \geq \ell(D(y,v)) \geq 1$
since $C\cup D$ does not satisfy~$(ii)$, and the result follows
if we choose $P'$ consisting of one edge.


For the cases that $|V(C)\cap V(D)|= i$ with $i \geq 3$,
let $V(C)\cap V(D)=\{v_0,\ldots,v_{i-1}\}$ and $x,y$ be the end vertices of $D$ such that
$D:= x \cdots v_0 \cdots v_2\cdots \cdots v_{i-1}\cdots y$. 

If there is a vertex $w \in V(C)$ of degree 2 in $C\cup D$ adjacent to $v_0$ (symmetrically for $v_{i-1}$) 
then, the graphs $wv_0 \cup D(v_0,y)$ and $(C-wv_0) \cup D(x,v_0)$ form a path decomposition of $C\cup D$. 
These paths are of length at least 4 unless $i=3$ and
$\ell(D(v_0,y))=2$ or $\ell(C)=4$ and $x=v_0$.  If $\ell(D(v_0,y))=2$ (resp. $\ell(C)=4$ and $x=v_0$),
then, since $C\cup D$ is triangle-free there is a vertex $w' \in V(C)$ such that
$C(v_0,w') \cup D(v_0,y)$ and $(C-C(v_0,w')) \cup D(x,v_0)$ 
(resp. $D(w',v_0)\cup C(v_0,v_2) \cup D(v_2,y)$ 
and $(C-C(v_0,v_2)) \cup D(w',v_2)$) form a desired path decomposition.
Thus, we can assume that
\begin{equation}\label{eq:cond}
\text{there is no vertex in $V(C)$ of degree 2 in $C\cup D$ adjacent to $v_0$ or to $v_{i-1}$.}  
\end{equation}

 Suppose that $v_0$ and $v_1$ are neighbors in $C$ (analogously, $v_{i-1}$ and $v_{i-2}$ are neighbors in $C$ ).
Then the path $D(v_0,v_1)$ is of length at least 3 because $C \cup D$ is triangle-free. Let $w$ be the neighbour of $v_1$ in $D(v_0,v_1)$.
Thus, we have that $wv_1 \cup (C - v_0v_1) \cup D(v_0,x)$ and its complement with respect to $C\cup D$ is a desired path decomposition,
and we can assume that
\begin{equation}\label{eq:cond1}
\text{$v_0$ and $v_1$  (resp. $v_{i-1}$ and $v_{i-2}$) are not neighbors in $C$}  
\end{equation}
%
Suppose now that there are $j \in \{0,\ldots,i-2\}$ and $v \in V(C \cup D)$ of degree two in $C\cup D$ such that
the following two conditions hold (see Figure~\ref{fig:conf1}(a)):
\begin{itemize}
 \item  $v_0$ and $v_{j+1}$ are neighbors in $C$ (resp.  $v_{i-1}$ and $v_j$ are neighbors in $C$)

    \item $v$ is an inner vertex of $D(v_j,v_{j+1})$ or it is adjacent to $v_j$  (resp. $v_{j+1}$) in the 
 path $C(v_j,v_{j+1})$,
 where $C(v_j,v_{j+1})$ is the path of $C$ connecting $v_j$ and $v_{j+1}$ which
 does not contain $v_0$ (resp. $v_{i-1}$).
\end{itemize}

 \begin{figure}[h]
 \centering
\scriptsize{
 \subfigure[]
  { \ifpdf\input{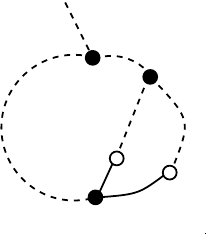_t}\else\input{conf1.ps_t}\fi } \qquad
 \subfigure[]
  {\ifpdf\input{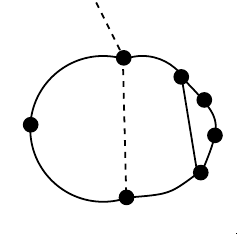_t}\else\input{conf2.ps_t}\fi }\qquad
   \caption{In (a), vertex $v$ is one of the white vertices. Dashed lines represent paths.}
 \label{fig:conf1}
}
 \end{figure}

In this case, there exists a desired path-decomposition of $C\cup D$.
Because of assumption~(\ref{eq:cond1}), $j \neq 0$ (resp.  $j \neq i-2$).
Let $C'$ denote the path in~$C$, disjoint from $C(v_j,v_{j+1})$, joining $v_0$ and $v_{j}$ (resp. $v_{j+1}$ and $v_{i-1}$).
If $v$ is an inner vertex of $D(v_j,v_{j+1})$ then, one can easily verify that the graph 
$D(v,v_{j+1}) \cup C(v_j,v_{j+1}) \cup C' \cup D(v_0,x)$
(resp. $D(v,v_{j})\cup C(v_j,v_{j+1}) \cup C'\cup D(v_{i-1},y)$)
and its complement with respect to $C\cup D$
are paths of length at least four. Similarly, if 
$v$ is adjacent to $v_j$ (resp. $v_{j+1}$) as stated in the second condition, the graph 
$(C(v_j,v_{j+1})-v_jv) \cup D(v_j,v_{j+1}) \cup  C' \cup D(v_0,x)$
(resp. $(C(v_j,v_{j+1})-v_jv) \cup D(v_j,v_{j+1}) \cup  C' \cup D(v_0,x)$)
and its complement with respect to $C\cup D$ form the desired path decomposition.
Hence, we can assume that 
\begin{equation}\label{eq:cond2}
\text{ such pair $j$, $v$ does not exist.}  
\end{equation}

We now finish the proof. Let $C_1$ and $C_2$ denote the two paths joining $v_0$ and $v_1$ in $C$.
Due to assumption~(\ref{eq:cond1}), there are $l,k \neq 1$ in $\{2,\ldots, i-1\}$ such that $v_l$ is
an inner vertex of $C_1$ and $v_k$ is
an inner vertex of $C_2$. Because of assumption~(\ref{eq:cond}), we can assume that
$v_lv_0$ and $v_0v_k$ are edges of $C$. Hence, due to assumption~(\ref{eq:cond2}) and the fact that
$C\cup D$ is triangle-free, we have $|l-k|>1$, and $D(v_k,v_{k-1})$, $D(v_l,v_{l-1})$ are edges. 
Without loss of generality, we can assume $l<k$.
If $v_{k-1}$ is an inner vertex of $C_2(v_1,v_k)$, then there are two more inner vertices $w, w'$,
since $C\cup D$ is triangle free and because of assumption~(\ref{eq:cond2}) one of these vertices, the neighbour
of $v_{k-1}$ in $C$ (say $w$), is in $|V(C)\cap V(D)|$ (see Figure~\ref{fig:conf1}(b)). Indeed, since $\ell(C)\leq 7$,  the path $C_2$ is $v_0v_kw'wv_{k-1}v_1$.
If $v_k$ were $v_{i-1}$, then because of assumption~(\ref{eq:cond2}), $D(v_0,v_1)$ is an edge
and since $\ell(C)\leq 7$, the vertices $v_0,v_1,v_l$ would create a triangle, a contradiction.
In fact, $D(v_0,v_1)$ is a path of length at least 2. 
Thus $v_{i-1}$ is $w$ or $w'$.
In either case, because of assumptions~(\ref{eq:cond}) and~(\ref{eq:cond2}), both $w, w'$ are in $V(C)\cap V(D)$,
and the vertex $v_k$ is connected to $w$ or to $w'$ throughout edges of $D$, which is not possible without
creating a triangle.
Now, assume that $v_{k-1}$ is an inner vertex of $C_1-v_lv_0$.
As before, if $v_k$ were $v_{i-1}$, then because of assumption~(\ref{eq:cond2}), $D(v_0,v_1)$ would be an edge.
Since $C\cup D$ is triangle free and because of assumption~(\ref{eq:cond}) there is a vertex $v_p$ with $p\neq 0, 1$
neighbor of $v_k$ in $C$, and either  $p<l$ or $l<p$. If $p<l$, then, since $C\cup D$ is triangle free $\ell(C)+\ell(D)\geq 15$,
a contradiction. Assume now that $l<p$. Because of assumption~(\ref{eq:cond2})
and since $C\cup D$ is triangle-free, there exists a vertex $v_t \neq v_1$ such that $v_t \in V(C)\cap V(D)$ 
is an inner vertex of $D(v_1,v_l)$ and $v_tv_l$ is an edge. But this implies that $\ell(C)+\ell(D)\geq 15$,
again a contradiction. Finally, if $v_k \neq v_{i-1}$, then $v_{i-1}$ is an inner vertex of $C_1(v_l,v_{k-1})$,
$C_1(v_1,v_{k-1})$, or $C_2(v_1,v_k)$. It can be checked that any case leads to a contradiction.
%
\end{proof}}

\medskip
{
\begin{proof}[Proof of Corollary~\ref{coroll:main}]
 {Let $G$ be a graph in $\mathcal{G}$ on $n$ vertices  with $|E(G)|\leq 4\,\lceil{n}/{2}\rceil$.}  
If $G$ is a hanging-square graph, then we use Proposition~\ref{obs:gallaiforhs} to conclude the result.
If $G$ is a cycle, then we can decompose $G$ into two paths and the statement holds.   
Otherwise, by Theorem~\ref{theo:main}, $G$ admits a $4$-pc~decomposition.
Let us consider a $4$-pc decomposition $\mathcal{D}$ of $G$ 
  which is maximal with respect to the number of paths. Clearly, $|\mathcal{D}| \leq \lceil{n}/{2}\rceil$.
 Note that since $\mathcal{D}$ is maximal with respect to the number of paths,
 every element in $\mathcal{D}$ has length at most 7.
If $\mathcal{D}$ contains paths only, then the corollary holds.
If not, suppose that $\mathcal{D}$ contains a cycle.
Because of Lemma~\ref{JGTcycles}, every cycle in $\mathcal{D}$
is of length~4 and if $D$ is an element of $\mathcal{D}$ 
such that $V(C)\cap V(D) \neq \emptyset$ and $E(C)\cap E(D){=}\emptyset$, then
$D$ is a path and the graph $C \cup D$ is one of the two graphs described in~Lemma~\ref{JGTcycles} (see Figure~\ref{fig:forbgraphs}). 
Moreover, since $G$ is connected and is not a cycle, such $D$ exists for each cycle in $\mathcal{D}$.
Let $C_1, C_2, \ldots, C_k$ be the cycles in $\mathcal{D}$
and $D_1, D_2, \ldots, D_k$ be paths (not necessarily distinct) such that 
$V(C_i)\cap V(D_i) \neq \emptyset$ and $E(C_i)\cap E(D_i){=}\emptyset$ for each $i\in\{1,\ldots,k\}$. 
Because of the structure of the graphs $C_i\cup D_i$ (Figure~\ref{fig:forbgraphs}), 
for each $i$ there is at most one $j\neq i$ such that $D_i=D_j$.
Thus, in order to complete the statement, it suffices to show that for each such graph $C_i\cup D_i$ or
$C_i\cup D_i \cup C_j$ with $D_i=D_j$ there is a decomposition into 2 or 3 paths, respectively, which is a fairly trivial task.
\end{proof}}

\section{Properties of hanging-square graphs}\label{sec:proph-sgraphs}

This section addresses properties of hanging-square graphs. {These
properties were used in the proof of the main results, but some of
them are interesting in their own right.}
Recall that if $H$ is a hanging-square graph, then $T_H$ is its skeleton
(see Definitions~\ref{def:ske}  and~\ref{def:h-sss}).


\begin{lemma}~\label{lemma:length5} Let $H$ be a hanging-square graph and $C$ be
  a cycle of length at least~$5$ such that $E(H)\cap E(C)=\emptyset$ and
  $V(T_H)\cap V(C)\neq \emptyset$.  If $H\cup C \in \mathcal{G}$, then $H\cup C$ has a 4-pc decomposition.
\end{lemma}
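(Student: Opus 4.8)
The plan is to peel $H$ apart into its two natural pieces and treat them separately. First I would dispose of the bunches: by Definition~\ref{def:blocks} a bunch of $k$ squares is, by construction, an edge-disjoint union of $k$ squares sharing only the joint pair $\{a,b\}$, so each bunch already \emph{is} a decomposition into $k$ cycles of length~$4$. Since $E(C)\cap E(H)=\emptyset$, these squares are edge-disjoint from $C$ (vertex coincidences are harmless in a decomposition), so they can be placed directly into the $4$-pc~decomposition of $H\cup C$ as length-$4$ cycles. This reduces the problem to decomposing $T_H\cup C$ into paths and cycles of length at least~$4$, where $T_H$ is the skeleton of $H$.

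For $T_H\cup C$ I would fix a vertex $w\in V(T_H)\cap V(C)$ (which exists by hypothesis) and, using Observation~\ref{obs:1}, a brrb-path $P_0$ of $T_H$ that contains~$w$. By Proposition~\ref{prop:fromanypath} there is a building sequence $P_0,P_1,\ldots,P_t$ of $T_H$ starting at $P_0$; each $P_i$ with $i\ge 1$ is a $4$-path or a $6$-path, i.e.\ already an element of length at least~$4$, and these building paths are pairwise edge-disjoint and edge-disjoint from $C$. Thus the only obstruction is the single length-$3$ path $P_0$, and the whole argument comes down to absorbing $P_0$ into $C$: I want to decompose $P_0\cup C$ into two \emph{simple} paths of length at least~$4$. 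Writing $P_0=y_0y_1y_2y_3$ and $C=w\,c_1\cdots c_{m-1}\,w$ with $m=\ell(C)\ge 5$, the idea is to cut $C$ at $w$ and at a second vertex, obtaining two arcs, and to glue the two halves of $P_0$ hanging at $w$ onto these arcs. The length bookkeeping is immediate: since $m\ge 5$ one can always choose the cut so that both resulting paths have length at least~$4$ (when $w$ is an endpoint of $P_0$ one may even take one path to be $C$ minus a single edge, of length $m-1\ge 4$, and the other $P_0$ plus one edge, of length~$4$).

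The main obstacle, and the only place requiring care, is ensuring that the two glued paths are simple when $C$ meets $P_0$ in vertices other than $w$; note that coincidences of $C$ with the building paths $P_i$ ($i\ge1$) or with the squares are irrelevant, as only $V(C)\cap V(P_0)$ enters $Q_1,Q_2$. Here I would exploit that $H\cup C\in\mathcal G$ is triangle-free. If $w$ is black, then by Observation~\ref{obs:black-red} it is an endpoint of $P_0$; a cycle-neighbor of $w$ equal to $y_1$ would force the edge $y_0y_1\in E(H)$ into $E(C)$, and one equal to $y_2$ would create the triangle $y_0y_1y_2$ in $H\cup C$, both impossible, so the only trouble is a cycle-neighbor equal to the far endpoint $y_3$, which I avoid by gluing through the other cycle-neighbor of $w$ (the two are distinct as $m\ge5$). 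If $w$ is red it is an internal vertex of $P_0$, and the same triangle-free reasoning shows that each black vertex of $P_0$ lies at distance at least~$3$ from $w$ along $C$ (an adjacency at $C$-distance~$1$ or~$2$ would repeat an $H$-edge or close a triangle), while the opposite red-adjacent endpoint lies at distance at least~$2$; choosing the orientation of the two arcs and the split vertex in accordance with these distance bounds keeps both paths simple. Assembling the building paths $P_1,\ldots,P_t$, the bunch squares, and the two paths $Q_1,Q_2$ obtained from $P_0\cup C$ then yields the desired $4$-pc~decomposition of $H\cup C$. I expect this last simplicity bookkeeping in the red (internal-$w$) case to be the fiddly step, whereas everything else is routine.
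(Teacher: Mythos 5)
Your overall strategy coincides with the paper's: peel off the bunches as $4$-cycles, use Observation~\ref{obs:1} and Proposition~\ref{prop:fromanypath} to reduce everything to a single brrb-path $P_0$ through a vertex $w\in V(T_H)\cap V(C)$, and split on the color of $w$. The reduction and the black (end-vertex) case are fine. The gap is in the red case. First, the distance bound you invoke is false for the far black end of $P_0$: writing $P_0=y_0y_1y_2y_3$ with $w=y_1$, a $C$-path $y_1v'y_3$ of length~$2$ together with $y_1y_2y_3$ closes only a $4$-cycle, which neither triangle-freeness nor the odd-distance condition forbids (one still has $d(y_0,y_3)=3$); so $y_3$ may sit at $C$-distance~$2$ from $w$, and the arc you attach to $y_1y_2y_3$ can revisit it. Second, and more seriously, you have no argument for the configuration in which the near black end $y_0$ lies on $C$. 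In that case $y_0$ is at $C$-distance at least~$3$ from $w$ in both directions, so every arc of length at least~$3$ emanating from $w$ reaches it, and the path obtained by gluing the pendant edge $y_0y_1$ to such an arc revisits $y_0$ or degenerates; no choice of orientation or split vertex is shown to avoid this collision and, simultaneously, a collision of the other arc with $y_2$ or $y_3$. ``Choosing the orientation of the two arcs and the split vertex in accordance with these distance bounds'' is precisely the step that requires proof, and the bounds you state do not supply one.

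The paper closes this with one extra reduction that you are missing: if $C$ contains an end of $P_0$ (a black vertex), re-anchor at that vertex and run the end-vertex case, which only needs a single neighbor $v'$ of the anchor in $C$ with $v'\notin V(P_0)$. In the remaining red case one may therefore assume $C$ avoids both ends of $P_0$, and then the specific split into $(P_0-y_0y_1)\cup y_1v'u$ and $(C-y_1v'u)\cup y_0y_1$ works: $v',u\notin\{y_2,y_3\}$ follows from edge-disjointness, triangle-freeness and $y_3\notin V(C)$, while $y_0\notin V(C)$ makes the long path simple. Adding this reduction and replacing your distance claims by that explicit verification repairs the argument; as written, the red case is not established.
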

\begin{proof}
  Let $\lambda$ be the coloring of $T_H$ and $v \in V(T_H) \cap V(C) $.  If
  $\lambda(v)$ is black, then by Observation~\ref{obs:1} there exists a brrb-path
  $P$ such that $v$ is an end vertex of $P$.
  Moreover, by Proposition~\ref{prop:fromanypath}, $P$ is the starting
  path of some building sequence of~$T_H$.  Hence, it suffices to prove that
 {$P\cup C$} has a $4$-pc~decomposition. Let $v'$
  be a neighbor of $v$ in $C$. Then, $v' \notin V(P)$, because $d_o(P\cup
  C)\geq 3$ and $P\cup C$ is triangle-free, and thus,  
  $P\cup vv',$ $C-vv'$ are paths of length at least~4 that decompose $P \cup C$.  We now suppose that $\lambda(v)$ is red. Again
  by Observation~\ref{obs:1} and Proposition~\ref{prop:fromanypath}, 
  there exists a brrb-path~$P$ containing~$v$ that is the starting path of some
  building sequence of~$T_H$.  In this case, we have that $v$ is an inner vertex
  of $P$.  We can assume that $C$ does not intersect the ends of $P$, otherwise
  we use the previous case to complete the proof.  We decompose {$P\cup C$}
  into two paths of length at least~$4$ in the following way. Let $vx$
  be the edge of $P$ such that $\lambda(x)$ is black and let $vv'u$ be a path of
  length~$2$ in $C$. Given that $C$ does not intersect the ends of $P$ and that
  $P\cup C$ is triangle-free, both $(P-xv) \cup vv'u$ and $(C-vv'u)
  \cup xv$ are paths of length at least~$4$.
\end{proof}

\begin{lemma}~\label{lemma:length4} 
  Let $H$ be a hanging-square graph, $Q$ be a square of $H$, and $C$ be a cycle
  of length at least~4 such that $E(H)\cap E(C)=\emptyset$, $V_q \cap
  V(C)\neq \emptyset$, where {$V_q = V(Q) \setminus V(T_H)$,} and $H\cup C$ is not a hanging-square graph. If $H\cup C \in \mathcal{G}$, then $H\cup C$ has a 4-pc~decomposition.
\end{lemma}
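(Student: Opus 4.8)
The plan is to mirror the proof of Lemma~\ref{lemma:length5}, the essential difference being that now the cycle~$C$ attaches to~$H$ through a vertex of the square~$Q$ rather than through the skeleton~$T_H$. The organizing observation is the one behind Observation~\ref{obs:}: every piece of a hanging-square graph is already a path or cycle of length at least~$4$ \emph{except} for the base $3$-path of a building sequence of~$T_H$. Thus it suffices to arrange the decomposition so that this unique short path is consumed, together with~$Q$ and~$C$, into genuine elements of length at least~$4$, while every remaining building path (length $4$ or~$6$) and every remaining square of every bunch (a length-$4$ cycle) is output unchanged.

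First I would single out a joint~$a$ of the bunch containing~$Q$ that lies in $V(T_H)$; such a joint exists by Definition~\ref{def:h-sss}(i). Using Observation~\ref{obs:1} and Proposition~\ref{prop:fromanypath}, I would produce a brrb-path~$P$ incident to~$a$ that starts a building sequence $P,P_1,\dots,P_t$ of~$T_H$; when $Q$ lies in a $2$-bunch I would take $P$ to be precisely the brrb-path $P_B$ occupied by~$Q$. With this choice the only pieces still needing attention are $P$, the square~$Q$, and the cycle~$C$, all mutually incident through the joint~$a$ and through a vertex $v\in V_q\cap V(C)$; everything else in~$H$ is a length-$4$-or-$6$ building path or a length-$4$ square and can be kept as is.

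The heart of the argument is a local decomposition of $P\cup Q\cup C$ (together with the onward edges that Definition~\ref{def:h-sss}(iii) guarantees at $x_0,x_2,x_3$) into simple paths and cycles of length at least~$4$. Fixing $v\in V_q\cap V(C)$ and a neighbour $v'$ of~$v$ on~$C$, I would splice the two square-edges at~$v$ with the two $C$-edges at~$v$, writing one long path of the form ``a subpath of~$P$, then an edge of~$Q$ into~$v$, then the arc $C-vv'$,'' and disposing of the remaining edges of~$Q$ and~$P$ as a second path and/or a length-$4$ cycle (for instance the cycle through a corner of~$Q$ and the edge $x_0x_1,x_1x_2$). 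A short case analysis is needed according to whether~$v$ is a degree-$2$ corner of~$Q$ or the free joint of a $1$-bunch, and whether~$Q$ sits in a $1$-bunch or in a $2$-bunch occupying~$P$ at $\{x_0,x_2\}$ or at $\{x_1,x_3\}$; the onward degrees supplied by Definition~\ref{def:h-sss}(iii) are exactly what prevent a short residual path from appearing at the square's corners. When $C$ has length in $[4,7]$ and meets~$Q$ in more than one vertex, I would instead apply Lemma~\ref{JGTcycles} to the two cycles~$Q$ and~$C$ and then reattach~$P$ through the joint~$a$.

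The main obstacle is ensuring that each constructed element is a \emph{simple} path (or cycle) of length at least~$4$, and here the hypothesis $H\cup C\in\mathcal{G}$ does the work: triangle-freeness forbids the short chords that would collapse a constructed path below length~$4$, and odd distance at least~$3$ keeps~$C$ from meeting $P\cup Q$ in a way that forces repeated vertices—playing the role that the disjointness $v'\notin V(P)$ played in Lemma~\ref{lemma:length5}. The delicate bookkeeping is to confirm that the two regimes (the explicit splice and the Lemma~\ref{JGTcycles} shortcut) cover every configuration of~$v$ and of the bunch type, and that the absorbed base $3$-path never re-emerges as a separate short element; once this is checked, the local pieces together with the untouched building paths and squares constitute the desired $4$-pc~decomposition of $H\cup C$.
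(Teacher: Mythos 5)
Your overall frame coincides with the paper's: pick the joint $a\in V(Q)\cap V(T_H)$, use Observation~\ref{obs:1} and Proposition~\ref{prop:fromanypath} to get a brrb-path $P$ through $a$ that starts a building sequence (the occupied path when $Q$ lies in a $2$-bunch), and reduce everything to decomposing $P\cup Q\cup C$, since every other building path and every other square is already an admissible element. That reduction is sound. The gaps are in the local analysis. First, you never actually treat the configurations in which $C$ also meets $V(P)$; your appeal to odd distance at least~$3$ to rule out ``repeated vertices'' is not enough. Such intersections genuinely occur (for instance a square $C$ through one vertex of $V_q$ and through $x_1$ and $x_3$, or through the non-joint vertex among $x_0,x_2$), and the paper spends most of its proof on them: it invokes Lemma~\ref{lemma:length5} when $\ell(C)\geq 5$ and $V(C)\cap V(P)\neq\emptyset$, and otherwise runs an explicit case analysis on $|V_q\cap V(C)|$, on whether the vertex $u\in V_q\cap V(C)$ is adjacent to the joint, and on where $V(C)\cap V(P)$ sits inside $\{x_0,\dots,x_3\}$. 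Your single ``splice at $v$'' template does not produce simple paths in these cases.

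Second, the Lemma~\ref{JGTcycles} shortcut does not close the argument: that lemma decomposes $Q\cup C$ into two paths of length at least~$4$, but their end vertices are chosen among non-intersection vertices of the two cycles and need not lie at the joint $a$, so there is no guarantee that the leftover $3$-path $P$ can be appended to either of them to form a simple path of length at least~$4$; moreover, when $\ell(C)\geq 8$ and $|V_q\cap V(C)|\geq 2$ neither of your two regimes applies, since the hypothesis places no upper bound on $\ell(C)$. The paper avoids both problems by keeping $Q$ attached to $P$ in a single constructed path $P'$ and observing that the residue $(P\cup Q\cup C)-E(P')$ is a path, a path plus a cycle, or a path plus two cycles according to $|V_q\cap V(C)|\in\{1,2,3\}$, using triangle-freeness. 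A smaller point: you should not draw the ``onward edges'' guaranteed by Definition~\ref{def:h-sss}(iii) into the local decomposition, since consuming part of another building path destroys exactly the bookkeeping that lets you output the remaining building paths unchanged.
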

\begin{proof}
  Let $\lambda$ be the coloring of $T_H$. We split the proof into two cases. \bigskip

  \noindent
  \textbf{Case $V(Q) \cap V(T_H)=\{v\}$.}
  Let $Q(v,v')$ be a shortest path in $Q$
  {connecting $v$ and $v' \in V_q \cap V(C)$.}  As
  before, by Observation~\ref{obs:1} and Proposition~\ref{prop:fromanypath}, 
  if $\lambda(v)$ is black (resp. red), then there exists a
  brrb-path $P$ that has $v$ as an end
    (resp. inner) vertex and $P$ is the starting
    path of some building sequence of $T_H$.  Hence, it suffices to
  show that we can find a $4$-pc~decomposition of {$P\cup Q \cup
    C$.}  
    
  Firstly, we suppose that $V(C)\cap V(P)=\emptyset$. 
  It is easy to see that, we can decompose $P\cup Q \cup C$ into two
paths of length at least $4$: form the first path $P'$ by starting at an arbitrary end of
$P$, continue by picking up the edges of $Q-Q(v, v')$ and add one more edge incident
with $v'$ in $C$. This is a path of length at least 4, and it is easy to check that also
the remaining edges form a path of length at least 4.
%

  
  Secondly,
  we suppose that $V(C)\cap V(P)\neq\emptyset$. If $\ell(C)\geq5$, then the result holds by Lemma~\ref{lemma:length5}.  We
  assume that $C$ has length 4. Let $P=x_0x_1x_2x_3$ (we can assume that
  either $v=x_0$, or $v=x_2$).  Moreover, $|V_q \cap V(C)|\leq2$. If
  $|V_q \cap V(C)|=\{u,w\}$, with $u \neq w$, then necessarily $u$ and
  $w$ are adjacent to $v$ and $|V(C)\cap V(P)|=1$. Let $V(C)\cap
  V(P)=\{z\}$. If $\lambda(v)$ is black, then $z=x_2$.  If
  $\lambda(v)$ is red, then $z=x_0$. In the first case we decompose
  {$P\cup Q \cup C$} into two paths, with one of them defined by
  $vuzwu',$  where $u' \in V_q$.  In the second case we decompose $P\cup Q
  \cup C$ into two paths, with one of them defined by $x_0ux_2wu'$, where $u' \in V_q$.  
  We now study the case that $|V_q \cap
  V(C)|=\{u\}$.  If $u$ is adjacent to $v$, then $v \notin V(C)\cap
  V(P)$. Moreover, $|V(C)\cap V(P)|=1$. Let $V(C)\cap V(P)=\{z\}$.
  For each choice of $v$ in $\{x_0, x_2\}$ we have that $P\cup Q
  \cup C$ can be decomposed into a cycle and a path, with the cycle
  defined by $uv \cup C(u,z) \cup P(z,v)$, where $uv \in E(Q)$ and
  $C(u,z)$ is a shortest path in $C$ (with end vertices~$u$ and~$z$).  
  
  We are left with the case that $u$ is not adjacent to $v$.  It is clear that $|V(C)\cap V(P)|\leq2$.
Moreover, we have that $v \notin V(C)\cap V(P)$, otherwise $H \cup C$ would be a hanging-square graph. We first
  suppose that $|V(C)\cap V(P)|=2$.  
  Let
  $Q(u,v)$ be
  a path in $Q$ with end vertices $u$, $v$, and $C(x_1,x_3)$ be a
  path in $C$ with end vertices $x_1, x_3$.
  We have that for either choice of $v$, $V(C)\cap V(P)=\{x_1,x_3\}$, 
  and we can decompose {$P\cup Q \cup C$} into a cycle and a path, where 
  the cycle is  defined by $Q(u,v)
  \cup ux_3 \cup C(x_1,x_3) \cup vx_1$, in the case that $v=x_0$, and by $x_1u \cup
  Q(u,v) \cup vx_3 \cup C(x_1,x_3)$, in the case that $v=x_2$. 
  Finally, let $V(C)\cap
  V(P)=\{z\}$. For either choice of $v$, we can decompose {$P\cup
    Q\cup C$} into a cycle and a path, where the cycle is defined by
  $Q(u,v) \cup C(u,z) \cup P(v,z)$, and $C(u,z)$ is a shortest path in $C$
  (with end vertices~$u$,~$z$).\bigskip
    
    \noindent \textbf{Case $V(Q) \cap V(T_H)=\{v,u\}$, $v \neq u$.}
    By Definition~\ref{def:h-sss} and Observation~\ref{obs:1},
    there exists a brrb-path~$P$
with $v$ as an end vertex, 
$u$ an innerl vertex and such that $v$ and $u$ are non-adjacent
(in other words, $P$ is occupied at $v$, $u$).
By Proposition~\ref{prop:fromanypath},
$P$ is the
starting path of some building sequence of $T_H$. 
As before, it suffices to show that $P\cup Q
\cup C$ has a $4$-pc~decomposition.  
Suppose that $V(C)\cap V(P)\neq\emptyset$. If the length of $C$ is at least 5, then we use Lemma~\ref{lemma:length5} to conclude
the statement. Thus, the length of $C$ is 4, and it is easy to see that $|V(C)\cap V(P)|>1$ is not possible.
Then the result follows from the previous case by interchanging the roles of $C$ and $Q$. 
Hence, we can assume that $V(C)\cap V(P)=\emptyset$.
If $V_q \cap V(C)=\{v'\}$, then  $P\cup vv' \cup vu'$, $(C-v'u')\cup (Q-vv')$, where $u'$ is a
neighbor of~$v'$ in~$C$, is a $4$-pc~decomposition of $P\cup Q
\cup C$.  
Assume $V_q \cap V(C)=\{v',\tilde{v}\}$, $v'\neq\tilde{v}$.
Let $x$ denote the black neighbor of $u$ in $P$ and
$C(v',\tilde{v})$ be a path in $C$ with end vertices $v'$, $\tilde{v}$. 
We can
decompose {$P\cup Q \cup C$} into a path  
given by $xu \cup uv' \cup C(v',\tilde{v}) 
\cup \tilde{v}v$ of length at least~4 and a cycle 
of length~4.
\end{proof}

\begin{lemma}\label{lemma:main}
  Let $T$ be a skeleton and $P$ be the last path in some building sequence
  of $T$.  Moreover, let $C$ be a square such that $E(T) \cap
  E(C)=\emptyset$, $V(T) \cap V(C)\neq \emptyset$ and $T \cup C \in \mathcal{G}$ is not a hanging-square graph.  In
  the following two cases $T \cup C$ has a 4-pc decomposition.
\begin{enumerate}
 \item  $[T-E(P)]\cup C$ is a hanging-square graph.
 \item  $[T{-}E(P)]\cup C$ consists of two connected components, namely $T{-}E(P)$ and $C$.
\end{enumerate}
\end{lemma}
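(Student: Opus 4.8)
The plan is to build the decomposition from the building-sequence decomposition of $T$ and to absorb its unique short piece using the four edges of the square $C$. Write $P=x_0x_1\cdots x_k$ with $k\in\{4,6\}$ and joint $v=x_{k/2}$, so that $T'=T-E(P)$ is a skeleton with building sequence $P_0,\ldots,P_{t-1}$ and $v\in V(T')$. The crucial observation I would record first is that a building sequence $P_0,P_1,\ldots,P_{t-1},P$ of $T$ is itself a \emph{path decomposition} of $T$ consisting of exactly one $3$-path (namely $P_0$) together with paths of length $4$ or $6$; moreover, by Proposition~\ref{prop:fromanypath} and Observation~\ref{obs:1} I may reroot this sequence so that the unique $3$-path is any prescribed brrb-path. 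Thus producing a $4$-pc~decomposition of $T\cup C$ amounts to choosing the brrb-path so that it is incident to $C$ and then using arcs of $C$ to lengthen it to length $\geq4$, while arranging that every leftover arc of $C$ merges into a piece of length $\geq4$ rather than a stray $2$- or $3$-path.

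For \emph{Case 2} I would use that $V(C)\cap V(T)\subseteq V(P)\setminus\{v\}$, so $C$ meets $T$ only at non-joint vertices of $P$. Triangle-freeness and odd distance at least $3$ of $T\cup C\in\mathcal{G}$ sharply limit how many vertices of $C$ lie on $P$ and where (in particular two shared vertices must be non-adjacent on $C$, i.e.\ the diagonal pair, forcing the connecting arcs to have length $2$). I would pick a building sequence whose initial brrb-path $P^\ast$ is an arm of $P$ (when $k=6$ the arms are already brrb-paths) or an arm extended through $v$ by a black neighbor in $T'$ (when $k=4$, via Observation~\ref{obs:1}), chosen to meet a shared vertex $w\in V(C)$; then $P^\ast$ together with an arc of $C$ starting at $w$ gives a path of length $\geq4$, and the remaining arc(s) of $C$ together with the other arm of $P$ are grouped into paths and squares of length $\geq4$. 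When $C$ meets $P$ at a single vertex so that $C$ and a subpath of $P$ close up into a cycle of length $\geq5$, I would simply invoke Lemma~\ref{lemma:length5}. The configurations in which the construction would otherwise leave an isolated $2$- or $3$-path (the ``tadpole'' $C_4$ plus a $3$-path tail, which genuinely has no $4$-pc~decomposition) are exactly those in which $C$ is attached as a $1$- or $2$-bunch at admissible positions of a brrb-path through $P$, i.e.\ those making $T\cup C$ a hanging-square graph; these are excluded by hypothesis.

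For \emph{Case 1} the graph $H:=T'\cup C$ is a hanging-square graph with $T_H=T'$ and $C$ a square ($1$- or $2$-bunch), and $T\cup C=H\cup P$ with $P$ attached at $v\in V(T')$. Since $H$ is hanging-square but $H\cup P$ is not, the failure of the conditions of Definition~\ref{def:h-sss} must be \emph{local} to $C$: adding the arms of $P$ at $v$ violates condition~(iii) for the brrb-path $P_C$ occupied (or containing the joint) of $C$, so $v$ is forced into a determined position relative to $P_C$ (typically the forbidden middle-red vertex, or a vertex that together with $C$'s joints forms a forbidden pair). I would reroot the building sequence (Proposition~\ref{prop:fromanypath}) to start at $P_C$, so the unique $3$-path of the induced decomposition is $P_C$ itself; this $3$-path is then absorbed by combining it with an arc of $C$ between $C$'s joints (a path of length $2$, by triangle-freeness), while the corresponding arm of $P$ is merged with the complementary arc to give a second path of length $\geq4$. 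As in Case 2, the residual sub-configurations that resist decomposition are precisely the hanging-square ones and are removed by the hypothesis that $T\cup C$ is not a hanging-square graph.

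The main obstacle I anticipate is the bookkeeping in these explicit constructions: guaranteeing that \emph{every} leftover arc of the square is attached to a path or closed into a square of length at least $4$, with no stray piece of length $2$ or $3$ surviving. The three ingredients that make this possible, and which I would invoke repeatedly, are (a) triangle-freeness together with odd distance at least $3$, which confine the shared vertices of $C$ and $T$ to a few well-separated positions and force the relevant arcs and diagonals to have length $2$; (b) the freedom to reroot building sequences so the single length-$3$ piece sits next to $C$; and (c) the ``not hanging-square'' hypothesis, which eliminates exactly the configurations (tadpoles and isolated bunches leaving a pendant $3$-path) that provably admit no $4$-pc~decomposition. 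Verifying that these three inputs cover all remaining cases in both Case 1 and Case 2 is where the real work lies.
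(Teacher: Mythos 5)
Your overall strategy is the same as the paper's: view a building sequence as a path decomposition of $T$ with a single $3$-path, use Proposition~\ref{prop:fromanypath} to reroot it so that the short piece sits next to $C$, reduce to a small local configuration ($P_0\cup P\cup C$ or similar), and decompose that configuration explicitly. The problem is that the explicit decompositions \emph{are} the proof, and you defer them: you yourself write that ``verifying that these three inputs cover all remaining cases \ldots{} is where the real work lies.'' The paper's proof consists almost entirely of that work --- in Case~1 a precise dichotomy (either the joint $v$ of $P$ becomes an isolated vertex of $[T-E(P)]\cup C$ and hence does not appear in it, or $[V(P)\setminus\{v\}]\cap V(C)\neq\emptyset$), a proof that the two alternatives are mutually exclusive, and a case-by-case construction; in Case~2 an explicit two-path decomposition formula for the $6$-path case and a figure enumeration for the $4$-path case. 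A plan that names the right ingredients but does not execute the case analysis cannot be accepted as a proof of this lemma.

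Two concrete points where the sketch is also inaccurate. First, in Case~2 you assert that the two vertices of $V(C)\cap V(P)$ ``must be non-adjacent on $C$, i.e.\ the diagonal pair, forcing the connecting arcs to have length $2$.'' This is not justified and is false in general: for a $6$-path $P=x_0\cdots x_6$ with joint $x_3$, the square $C$ may contain the edge $x_1x_4$ (the vertices $x_1,x_4$ are at distance $3$ in $P$, so no triangle arises, and one checks that odd distance $\geq 3$ can still hold); the paper's formula handles arcs of lengths $1$ and $3$, but your construction is built on the length-$2$/length-$2$ split. Second, in Case~1 your sketch only ever manipulates $P_0\cup P\cup C$, whereas the paper's sub-case (a) sometimes requires pulling in a \emph{second} building path $P'$ with joint in $\{x_0,x_2\}$ and decomposing $P_0\cup P\cup C\cup P'$ into three paths; without that extra piece the leftover configuration can be a pendant $3$-path that cannot be absorbed, so your argument would stall exactly there.
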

\begin{proof}Let $\lambda$ be the coloring of $T$ and $v$ be the joint of $P$.  
We analyze {cases}~\emph{1} and~\emph{2} separately.

 \noindent {\emph{Case 1.}} Assume that $[T{-}E(P)]\cup C$ is a hanging-square graph. 
 We can assume that there exists a brrb-path $P_0=x_0x_1x_2x_3$ such that $E(P_0) \cap E(P)
 =\emptyset$ and $P$ is the last path of a building sequence starting at $P_0$. Moreover, we
 can assume that if $C$ is a 1-bunch with joint $u$, then
 $V(P_0) \cap V(C) = \{u\}$ and $u \in \{x_0,x_2\}$. Otherwise, $P_0$ is occupied at $x_0,x_2$ by $C$ in $[T-E(P)]\cup C$.

Suppose that  $[V(P)\setminus\{v\}] \cap V(C) = \emptyset$. Since $T\cup C$ is not a
    hanging-square graph, we have that $C$ is a 2-bunch and that either $P$ is a 4-path with joint
 $x_1$, or there exists another building path $P' \neq P$ of $T$ with
 joint $v' \in \{x_0,x_2\}$ such that $v \neq v'$ and $v\in\{x_0,x_2\}$. 
 In the first case
 we can easily see that $P_0\cup P \cup C$ can be decomposed into two
 paths of length at least~4. In the second case, {it is routine} to
 check that $P_0 \cup P \cup C \cup P'$ can be decomposed into $3$~paths
 of length at least~$4$.  
 We now suppose that $[V(P)\setminus\{v\}] \cap V(C) \neq \emptyset$.  Let us
 suppose that $C$ is a 1-bunch.  
 If $v=u$, then $T\cup C$ would be a hanging square graph, a contradiction.  
 All the remaining cases are worked out in
 Figure~\ref{fig:caselongleg}.  Hence, it remains to analyze the case
 that $P_0$ is occupied at $x_0,x_2$ by $C$. But in this case, $P$ would intersect $V_c =
 V(C)\setminus V(P_0)$, which yields a contradiction to $d_o(T\cup C)\geq 3$. Hence, the result follows.
\begin{figure}
 \centering
\ifpdf\input{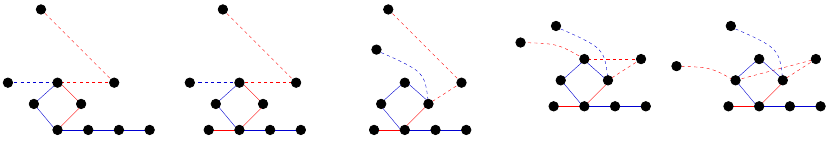_t}\else\input{path_leg_cycle.ps_t}\fi
\caption{Proof of Lemma~\ref{lemma:main}, Case 1.  In all figures, {possibly}
  $v \in \{x_1, x_2, x_3\}$. Recall that $P_0=x_0x_1x_2x_3$ and $P = P' \sqcup
  \tilde{P}$. In the figures, $P'$ has ends $x'_0$, $v$ and $\tilde{P}$ has ends
  $\tilde{x}_0$, $v$.  Moreover, the length of {$P'$} and $\tilde{P}$ is either 2
  or 3 depending on whether $P$ is a 4-path or a 6-path.}
\label{fig:caselongleg}
\end{figure}
 
\medskip

\noindent {\emph{Case 2.}} Assume that $[T{-}E(P)]\cup C$ consists of exactly two connected components, namely $T{-}E(P)$ and $C$.
We have that all inner vertices of $P$ but its joint $v$ have degree~$2$ in $T$, 
the ends of~$P$ have degree~$1$ in $T$ and
$V(C)\cap V(T) \subset V(P)\setminus\{v\}$.  If $P$ is a 6-path (resp. 4-path), 
then $\lambda(v)$ is black (resp. red) and there exists a brrb-path $P_0$ that
is the starting path for some building sequence of $T - E(P)$ and such that $v$ is an
end (resp. inner) vertex of $P_0$.  In both cases, it suffices to show that $P_0\cup P\cup
C$ has a 4-$pc$ decomposition.  Let $P= P'\sqcup \tilde{P}$. 
If $C$ intersects only one of the paths $P'-v$, $\tilde{P}-v$, then it is
easy to see that $T \cup C$ is a hanging-square graph. Hence, $C$ intersects
both $P'-v$ and $\tilde{P}-v$. 
Let $V(C)\cap V(P')=x$, $V(C)\cap V(\tilde{P})=y$,
$x'$ (resp. $\tilde{x}$) be the end vertex of $P'$ (resp. $\tilde{P}$) distinct {from} $v$.

We suppose now that $P$ is a $6$-path.
Without loss of generality we can assume that $ \ell(P'(x',x)) \leq \ell(\tilde{P}(\tilde{x},y))$.
Let $C(x,y)$ denote the shortest path in $C$ connecting $x$ and $y$. Then, the following two paths
form a $4$-pc~decomposition of $P_0 \cup P \cup C$,
$$P'(x',x) \cup C(x,y) \cup \tilde{P}(y,v) \cup P_0 \quad {\rm and} \quad \tilde{P}(\tilde{x},y) \cup [C-C(x,y)] \cup P'(x,v).$$
The case that $P$ is a $4$-path is shown in Figure~\ref{fig:short_legsproof}.
\begin{figure}[h]
 \centering
  \ifpdf\input{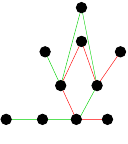_t}\else\input{shortleg_1.ps_t}\fi\qquad
  \ifpdf\input{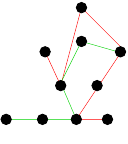_t}\else\input{shortleg_2.ps_t}\fi\qquad
  \caption{All configurations in which $P$ is a $4$-path and \emph{Case~2} of
    Lemma~\ref{lemma:main} occurs.}
\label{fig:short_legsproof}
\end{figure}
\end{proof}

\begin{lemma}\label{lemma:element}
Let $H$ be a hanging-square graph, $Q$ be a square of $H$
and $V_q=V(Q) \setminus V(T_{H})$. 
Let $P'$ be a path such that $4 \leq\ell(P')\leq7$,  $E(P')\cap E(H)=\emptyset$
and $V(P') \cap V(H) \subset V_q$.
If $H \cup P'$ is triangle-free, then 
$H \cup P'$ has a $4$-pc~decomposition.
\end{lemma}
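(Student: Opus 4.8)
Looking at this lemma, I need to prove that a hanging-square graph $H$ together with a path $P'$ (of length between 4 and 7, edge-disjoint from $H$, meeting $H$ only in $V_q = V(Q)\setminus V(T_H)$) admits a 4-pc decomposition.

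Let me think about the structure here. $Q$ is a square (4-cycle) of $H$, and $V_q$ consists of those vertices of $Q$ not on the skeleton $T_H$. The path $P'$ attaches to $H$ only through vertices in $V_q$.

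The key idea is that this lemma should reduce to the already-proven Lemma~\ref{lemma:length4}, which handles the case of attaching a *cycle* $C$ to $H$ through $V_q$. The difference is that here I have a *path* $P'$ rather than a cycle.

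---

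The plan is to reduce this lemma to the cycle case already settled in Lemma~\ref{lemma:length4}. Since $V(P')\cap V(H)\subseteq V_q$ and $|V_q|=3$ (the square $Q$ shares either one vertex or the two joints with $T_H$, leaving respectively $3$ or $2$ vertices outside; in the shared-joint case the reasoning is analogous), the endpoints and internal vertices of $P'$ touch $H$ only at the two or three non-skeleton vertices of $Q$. I would first dispose of the simplest situation, in which $P'$ meets $V_q$ in at most one vertex: then $H$ and $P'$ share a single vertex $v'$, and I can reroute one pendant edge of $Q$ incident to $v'$ to close $P'$ into a cycle. Concretely, let $u'v'$ be an edge of $Q$ with $u'\in V_q$; then $P'\cup u'v'\cup(\text{the edge }u'a)$ or a similar short augmentation creates a cycle $C$ of length at least~4 attached to $H$ only through $V_q$, with $E(C)\cap E(H)=\emptyset$, at the cost of removing one or two edges of $Q$ which are then appended to a remaining path. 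Applying Lemma~\ref{lemma:length4} to $H'\cup C$, where $H'$ is $H$ with those edges stripped, gives the decomposition.

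The cleaner and more robust route, which I would actually carry out, is to close $P'$ into a cycle using the square $Q$ directly. Write $P'=y_0y_1\cdots y_k$ with $y_0,y_k\in V_q$ (the endpoints of a longest subpath meeting $V_q$). Because $V_q$ has at most three vertices and $Q$ is a 4-cycle, there is a path $Q(y_0,y_k)$ inside $Q$ of length $1$ or $2$ joining $y_0$ and $y_k$ with its internal vertex (if any) in $V_q$. Then $C:=P'\cup Q(y_0,y_k)$ is a cycle of length $\ell(P')+|Q(y_0,y_k)|\in[5,9]$. Setting $H'=H-E(Q(y_0,y_k))$, I claim $H'$ is again a hanging-square graph (removing one or two edges of a single square leaves either a brrb-path intact or a structure still meeting Definition~\ref{def:h-sss}), that $E(H')\cap E(C)=\emptyset$, that $V(T_{H'})\cap V(C)\neq\emptyset$ through the skeleton vertices of $Q$, and that $H'\cup C=H\cup P'\in\mathcal{G}$ is triangle-free with odd distance at least~$3$. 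If $\ell(C)\le 7$, Lemma~\ref{lemma:length4} (or Lemma~\ref{lemma:length5} when $V_q$ is not hit) applies directly. If $\ell(C)\ge 8$, I split off a short path from $P'$ first so that the closed cycle has length at most~$7$, leaving a residual path of length at least~$4$ to be added to the decomposition.

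The main obstacle I anticipate is a careful case analysis of how $P'$ meets $V_q$ — whether at one, two, or all three non-skeleton vertices of $Q$ — together with verifying in each case that after rerouting through $Q$ the resulting graph $H'$ is genuinely a hanging-square graph (so that Definition~\ref{def:h-sss}(iii) and the degree conditions still hold) and that the newly formed cycle $C$ has length in the admissible range $[4,7]$ for invoking Lemma~\ref{lemma:length4}; the bound $\ell(P')\le 7$ is exactly what keeps $\ell(C)$ small enough after trimming. The triangle-free and odd-distance hypotheses $H\cup P'\in\mathcal{G}$ are what guarantee that the rerouted cycle remains chordless and that no two odd vertices are created too close together, so I would invoke these at each branch to rule out the degenerate configurations.
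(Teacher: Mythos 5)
Your reduction to Lemma~\ref{lemma:length4} has several genuine gaps. First, the case $|V(P')\cap V_q|=1$ cannot be handled by ``closing $P'$ into a cycle'': if $P'$ shares exactly one vertex $v'$ with $H$, then no cycle of $H\cup P'$ can contain an edge of $P'$ at all (a cycle would have to enter and leave $P'$ at two distinct vertices of $V(P')\cap V(H)$), so the augmentation $P'\cup u'v'\cup u'a$ you describe is a tree, not a cycle. This case is not degenerate --- it is one of the two main cases --- and the paper handles it by splitting $P'$ at the unique intersection vertex $u_i$ and absorbing the two halves $P'(u_1,u_i)$ and $P'(u_i,u_l)$ into the two edge-disjoint $u$--$u_i$ paths of the square $Q$, extended along a brrb starting path $P$ of a building sequence of $T_H$. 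Second, even when $|V(P')\cap V_q|=2$, say at $u_i,u_j$, your cycle $C=P'(u_i,u_j)\cup Q(u_i,u_j)$ leaves over the pendant stubs $P'(u_1,u_i)$ and $P'(u_j,u_l)$, which the odd-distance hypothesis only forces to have length at least~$1$; they are too short to be decomposition elements and your reduction does not say where they go. Third, $H'=H-E(Q(y_0,y_k))$ is in general \emph{not} a hanging-square graph: deleting one or two edges of a square leaves a path of length $2$ or $3$ dangling from the joint(s), which is not a bunch of squares and violates Definition~\ref{def:h-sss}; so Lemma~\ref{lemma:length4} is not applicable to $H'\cup C$ as stated.

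The paper's proof avoids all of this by not attempting a reduction to the cycle lemma. It fixes a brrb-path $P=x_0x_1x_2x_3$ that starts a building sequence of $T_H$ and meets $Q$ at its joint(s), reduces the problem to decomposing $P\cup Q\cup P'$, rules out $|V(P')\cap V_q|=3$ using triangle-freeness, the odd-distance bound, and $\ell(P')\le 7$, and then in each of the remaining cases ($|V(P')\cap V_q|\in\{1,2\}$, $Q$ a $1$-bunch or a $2$-bunch) writes down an explicit decomposition into two paths of length at least~$4$, using the two arcs of the square between the joint and the intersection vertices to carry the pieces of $P'$. If you want to salvage your approach, you would need a separate direct argument for the single-intersection case and a strengthened cycle lemma that tolerates both the damaged square in $H'$ and short pendant remainders of $P'$; at that point you have essentially reconstructed the paper's case analysis.
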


\begin{proof} 
  First of all, by Observation~\ref{obs:1}, Proposition~\ref{prop:fromanypath}
  and symmetry,
  we can assume that there is a brrb-path $P=x_0x_1x_2x_3	$ that is
  the starting path of some building sequence of $T_H$ such that if $Q$ is a
  $1$-bunch (resp. $2$-bunch) with joint $u$, then $V(P) \cap V(Q) =
  \{u\} \in \{x_0,x_1\}$ (resp.  $P$ is occupied at $x_0, x_2$ by $Q$ and $u\in\{x_0,x_2\}$). 
    Let $P' = u_1 \cdots u_{l}$, $Q= y_0\cdots y_3$ with $y_0=u$ and $Q(x,y)$ denote a shortest path in $Q$ connecting $x$ and $y$. 
  
  As before, to prove that $H\cup P'$ has a $4$-pc~decomposition,
  it suffices to prove that $P \cup Q \cup P'$ has a  $4$-pc~decomposition.
   We study cases according to the three possible values of $|V(P') \cap V_q|$.

   Let $V(P') \cap V_q=\{y_1,y_2,y_3\}$ and $y_{i}=u_{\pi(i)}$ for each $i$. In this case, $Q$ must be a 1-bunch.  
   By symmetry,
   either $\pi(1)<\pi(2)<\pi(3)$ or $\pi(2)<\pi(1)<\pi(3)$. In the first case, 
   the path  $P(x_3,u) \cup (Q- uy_1) \cup P'(y_1,u_1)$ and its complement
   with respect to $P \cup Q \cup P'$ and in the second case, 
   the path  $P(x_3,u) \cup uy_3 \cup P'(y_3,y_1) \cup y_1y_2 \cup P'(y_2,u_1)$ and its complement
   also with respect to $P \cup Q \cup P'$  form a $4$-pc~decompositions of $P \cup Q \cup P'$.

    Let $V(P') \cap V_q=\{w\}$. If $Q$ is a 2-bunch set $u=x_0$.
  Without loss of generality, we assume that $\ell(P'(u_1,w)) \leq \ell(P'(w,u_{l}))$.
  If $u\neq x_0$, then $P(x_3,u) \cup (Q-Q(u,w)) \cup P'(u_1,w) \quad \text{and} \quad
  P(x_0,u) \cup Q(u,w) \cup P'(w,u_{l})$
  form a 4-$pc$~decomposition of $P \cup Q \cup P'$, and  
 if $u= x_0$, then $P(x_3,u) \cup Q(u,w) \cup P'(u_1,w) \quad \text{and} \quad
  (Q-Q(u,w)) \cup P'(w,u_{l})$ 
  form a 4-$pc$~decomposition of $P \cup Q \cup P'$.


  Let $V(P') \cap V_q=\{u_i, u_j\}$ with $i<j$. If $Q$ is a 2-bunch set $u=x_2$.
  Without loss of generality we assume that $\ell(P'(u_j,u_l))\leq \ell(P'(u_i,u_1))$.
  If $u\neq x_0$ or $u= x_0$ then, $P(x_3,u) \cup (Q-Q(u,u_j)) \cup P'(u_j,u_l)$ and its complement with respect
  to $P \cup Q \cup P'$ form a 4-$pc$~decomposition of $P \cup Q \cup P'$. 
  \end{proof}

 \begin{lemma}\label{lemma:2basis}
   Let $H$, $H'$ be hanging-square graphs composed (only) of squares $Q$, $Q'$ and 
   brrb-paths $P$, $P'$, respectively.  Let  {$V_q = V(Q)\setminus
     V(T_H)$} and $V_{q'} = V(Q') \setminus V(T_{H'})$.
   Assume that $E(H)\cap E(H')=\emptyset$ and $V(H) \cap
   V(H') \subset V_q \cap V_{q'}\neq \emptyset$.  If $H \cup {H'} \in \mathcal{G}$, then it has a \hbox{4-pc~decomposition.}
 \end{lemma}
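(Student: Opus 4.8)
The plan is to reduce, as in the previous lemmas, to showing that the ``minimal piece'' $P \cup Q \cup P' \cup Q'$ admits a $4$-pc~decomposition, where $P, P'$ are the brrb-paths and $Q, Q'$ the squares of $H, H'$. By Observation~\ref{obs:1} and Proposition~\ref{prop:fromanypath} we may assume $P$ and $P'$ are starting paths of building sequences of $T_H$ and $T_{H'}$ respectively; any remaining building paths and bunches of the two hanging-square graphs can then be peeled off and decomposed separately, so the whole problem collapses to the interaction between the two squares along their shared vertices in $V_q \cap V_{q'}$.

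First I would set up notation: write $P=x_0x_1x_2x_3$ and $P'=y_0y_1y_2y_3$, label the non-skeleton vertices of the squares (recall $|V_q|=|V_{q'}|=3$ when the square hangs by a single skeleton vertex, or fewer free vertices when it is a $2$-bunch), and record which of $x_0,x_2$ (resp.\ $y_0,y_2$) are joints. The key structural input is that $V(H)\cap V(H') \subseteq V_q \cap V_{q'}$ is nonempty, so $H$ and $H'$ meet \emph{only} at free vertices of their squares. Since both $H\cup H'\in\mathcal{G}$ is triangle-free with odd distance at least~$3$, the number of shared vertices $|V_q\cap V_{q'}|$ is tightly constrained; I would argue it is at most~$2$ (three shared free vertices would force a short cycle or a pair of odd vertices too close together, contradicting $d_o(H\cup H')\geq 3$). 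This leaves the two genuine subcases $|V_q\cap V_{q'}|\in\{1,2\}$.

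Next I would treat those subcases by explicit decomposition, exactly in the style of Lemma~\ref{lemma:length4} and Lemma~\ref{lemma:element}. In each subcase the strategy is to build two (or, when extra bunches are present, a few more) paths each of length at least~$4$ by threading through one skeleton path, then crossing a square, the shared free vertex/vertices, the other square, and finally the other skeleton path; the complementary edges form a second path or a residual cycle of length at least~$4$. Concretely, when $|V_q\cap V_{q'}|=\{v\}$, a single free vertex $v$ acts as a junction, and one routes $P(x_0,\cdot)\cup Q(\cdot,v)\cup Q'(v,\cdot)\cup P'(\cdot,y_3)$ as one long path while the leftover square edges together with the other halves of $P,P'$ form the second element. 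When $|V_q\cap V_{q'}|=\{v,w\}$, the two shared vertices give two crossing points, and one decomposes into a path of length at least~$4$ plus a $4$-cycle, using triangle-freeness to guarantee that the intermediate subpaths are nontrivial. The joint/coloring conditions of Definition~\ref{def:h-sss} (notably that $x_0,x_2$ cannot \emph{both} be joints) are what ensure at least one endpoint of each brrb-path is available to absorb the short residual segments so that every produced path reaches length~$4$.

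The main obstacle I anticipate is the bookkeeping in the $|V_q\cap V_{q'}|=2$ case: guaranteeing that both produced elements have length at least~$4$ requires controlling the lengths of the square subpaths $Q(v,w)$ and $Q'(v,w)$ simultaneously, and here triangle-freeness of $H\cup H'$ together with $d_o(H\cup H')\geq 3$ is essential to rule out the degenerate configuration where a crossing would create a triangle or leave a path of length~$3$. I expect the cleanest route is to fix, via the odd-distance hypothesis, that the two shared free vertices are non-adjacent in each square (so each of $Q(v,w),Q'(v,w)$ has length exactly~$2$), which pins down the geometry and makes the required $4$-paths and $4$-cycle explicit; the case $|V_q\cap V_{q'}|=1$ is then comparatively routine, mirroring the single-intersection analysis already carried out in Lemma~\ref{lemma:length4}.
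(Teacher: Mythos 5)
Your overall strategy coincides with the paper's: reduce to the graph $P\cup Q\cup P'\cup Q'$ (which here is all of $H\cup H'$, since the lemma already restricts each hanging-square graph to one square and one brrb-path), bound $|V(Q)\cap V(Q')|$ by $2$ using triangle-freeness and edge-disjointness, and then exhibit explicit decompositions case by case according to the intersection size and the $1$-bunch/$2$-bunch status of the squares. Two remarks, though. First, the paper extracts one clean structural fact you do not state: $Q$ and $Q'$ cannot both be $2$-bunches, since then the two black end vertices $x_0$ and $x'_0$ (which are odd) would be joined by a path of length $2$ through a shared vertex of $V_q\cap V_{q'}$, violating $d_o(H\cup H')\geq 3$. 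This is what keeps the case analysis short; your plan of ``recording which of $x_0,x_2$ are joints'' would rediscover it, but you should make it explicit.

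Second, and more substantively, your proposed decomposition for the case $|V_q\cap V_{q'}|=2$ --- ``a path of length at least $4$ plus a $4$-cycle'' --- fails as stated. In that configuration the shared vertices $v,w$ are non-adjacent in both squares (by triangle-freeness and edge-disjointness, not by the odd-distance hypothesis as you suggest), so $Q\cup Q'$ is a bunch of two squares on $\{v,w\}$; after removing any $4$-cycle from $H\cup H'$ the remaining $10$ edges have four vertices of odd degree (the four ends $x_0,x_3,x'_0,x'_3$, possibly after relabelling), so they cannot form a single path. You would need either a $4$-cycle plus \emph{two} paths, or --- as the paper does --- no cycle at all: the paper threads two paths, one of the form $P(x_0,u)\cup Q(u,v)\cup[Q'-Q'(u',v)]\cup P'(u',x'_3)$, whose complement is automatically a second path of length at least $4$. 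This is a small repair, but as written your two-intersection case does not go through.
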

 \begin{proof}
Note that $|V(Q) \cap V(Q')|\in \{1,2\}$, otherwise $Q\cup Q'$ would contain a triangle.
Let $P=x_0 \cdots x_3$, $P'=x'_0 \cdots x'_3$ and $Q(x,y)$ (resp. $Q'(x,y)$) be a longest path 
in $Q$ (resp. $Q'$) connecting 
 $x$ and $y$. In addition, observe that 
 if $Q$ and $Q'$ were $2$-bunches, then there would exist a path of length 2 connecting
 $x_0$ and $x'_0$, a contradiction to $d_o(H\cup H')\geq 3$. Hence, at least one of $Q$, $Q'$
 is a $1$-bunch.

We first suppose that $V(Q) \cap V(Q')= \{v\}$. 
 Suppose that $Q,$ $Q'$ are $1$-bunches and let $u, u'$ be their joints, respectively.
 Without loss of generality $u\in\{x_0,x_1\}$ and $u'\in\{x'_0,x'_1\}$.
 In this case, a $4$-pc~decomposition of $H \cup H'$
 is formed of two paths, with one of them, the following: 
$ P(x_0,u) \cup Q(u,v) \cup Q'(u',v) \cup P'(x'_0,u').$
 We now assume that $Q'$ is a $2$-bunch with joints $x'_0,x'_2$
 and $Q$ is the $1$-bunch previously described. We have that the following paths belongs to
 the desired  $4$-pc~decomposition of $H \cup H'$ into two paths: $Q'(x'_0,v) \cup Q(v,u) \cup P (u,x_3).$
 
We can now assume that  $V(Q) \cap V(Q')= \{v,w\}$ with $v \neq w$.
As before, suppose that $Q,$ $Q'$ are $1$-bunches with joints $u\in\{x_0,x_1\}$ and $u'\in\{x'_0,x'_1\}$.
Observe that $v,w$ are at distance one of $u,u'$. In this case, we see that one of the two paths in a 
$4$-pc~decomposition
of $H\cup H'$ is $ P(x_0,u) \cup Q(u,v) \cup [Q'-Q'(u',v)] \cup P'(u',x'_3).$
In the case that $Q'$ is a $2$-bunch with joints $x'_0,x'_2$, there exists a $4$-pc decomposition into two paths, 
with one of the paths as follows: $ P(x_0,u) \cup Q(u,w) \cup [Q'-Q'(w,x'_2)] \cup P'(x'_0,x'_2).$ \end{proof}


{ 
  \begin{lemma} \label{lemma:treepath} Let $T$ be a skeleton and
    $P$ be a path of length at least 4 such that $T\cup P$ is a
    tree, $E(T)\cap E(P)=\emptyset$ and $V(T)\cap V(P)=\{v\}$, where $v$
    is not a leaf of $P$.  If $T\cup P$ is
    not a skeleton and $d_o(T\cup P)\geq 3$, then it has a \hbox{4-pc decomposition.}
\end{lemma}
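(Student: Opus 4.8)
The plan is to look only at the single attachment vertex $v$, split $P$ into its two legs, and argue according to the colour $\lambda(v)$; the point is that the two leaves of $P$ together with $v$ control everything, and that the two extremal leg‑length configurations are exactly the ones in which $T\cup P$ is a skeleton (hence excluded by hypothesis). Write $P=P_1\cup P_2$, where $P_1,P_2$ are the two legs hanging from the internal vertex $v$, of lengths $\ell_1,\ell_2\geq 1$ with $\ell_1+\ell_2=\ell(P)\geq 4$. Since $v$ is internal to $P$, attaching $P$ raises $d(v)$ by exactly $2$, so $v$ keeps its parity; by Observation~\ref{obs:black-red}, $\lambda(v)$ is black iff $v$ is odd in $T$. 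The only new odd vertices of $T\cup P$ are the two leaves of $P$, while the odd vertices of $T$ are unchanged.

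First I would treat $\lambda(v)=\text{black}$. Then $v$ and both leaves of $P$ are odd, and since each leaf is at distance $\ell_i$ from $v$, the hypothesis $d_o(T\cup P)\geq 3$ forces $\ell_1,\ell_2\geq 3$. If $\ell_1=\ell_2=3$, then $P$ is a $6$-path with middle vertex $v$; a $6$-path has a black joint and its remaining vertices are forced red/black by their degrees, so appending $P$ to any building sequence of $T$ (identifying the joint $v$ with a vertex of $T$) is a legal building step, whence $T\cup P$ is a skeleton, contradicting the hypothesis. Hence some leg, say $P_a$, has length $\geq 4$; let $P_b$ be the other. By Observation~\ref{obs:1} and Proposition~\ref{prop:fromanypath} there is a brrb‑path $P_0$ with end vertex $v$ that starts a building sequence $P_0,B_1,\dots,B_t$ of $T$. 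Then $P_0\cup P_b$ is a path of length $3+\ell(P_b)\geq 6$, $P_a$ is a path of length $\geq 4$, and $\{P_0\cup P_b,\,P_a,\,B_1,\dots,B_t\}$ is a $4$-pc~decomposition of $T\cup P$.

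Next I would treat $\lambda(v)=\text{red}$. Now $v$ is even, hence not odd, and by Observation~\ref{obs:redblneigh} it has a black (odd) neighbour in $T$ at distance $1$; together with $d_o(T\cup P)\geq 3$ this forces $\ell_1,\ell_2\geq 2$. If $\ell_1=\ell_2=2$, then $P$ is a $4$-path with (red) joint $v$, and as before appending it to a building sequence of $T$ exhibits $T\cup P$ as a skeleton, a contradiction. So, after relabelling, I may assume $\ell_2\geq 3$. Taking a brrb‑path $P_0=x_0x_1x_2x_3$ through $v$ with $v=x_1$ that starts a building sequence $P_0,B_1,\dots,B_t$, I split the three edges of $P_0$ between the legs: $A_1=P_1\cup x_1x_2\cup x_2x_3$ has length $\ell_1+2\geq 4$, and $A_2=x_0x_1\cup P_2$ has length $1+\ell_2\geq 4$; then $\{A_1,A_2,B_1,\dots,B_t\}$ is a $4$-pc~decomposition.

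The remaining verifications are routine: each listed piece has length at least $4$, and the pieces are edge‑disjoint and cover $E(T\cup P)$, because a building sequence edge‑partitions $T$, the $B_i$ avoid $E(P_0)$, and $E(P)\cap E(T)=\emptyset$ (vertices may be shared, but no edge is). The main obstacle is not any single estimate but the structural bookkeeping: deriving the correct lower bounds on $\ell_1,\ell_2$ from $d_o\geq 3$ in each colour case, and, crucially, recognising that the two extremal configurations ($\ell_1=\ell_2=3$ with $v$ black, and $\ell_1=\ell_2=2$ with $v$ red) are precisely the cases in which $P$ is a legal building path (a $6$-path, resp.\ a $4$-path) with joint $v$, so that $T\cup P$ is a skeleton and is ruled out by hypothesis. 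This identification rests on the uniqueness of the skeleton colouring (Observation~\ref{obs:black-red}).
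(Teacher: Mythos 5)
Your proof is correct and follows essentially the same route as the paper's: reduce to $P'\cup P$ for a brrb-path $P'$ that starts a building sequence and meets $v$ appropriately, bound the two leg lengths of $P$ at $v$ using $d_o(T\cup P)\geq 3$ according to the colour of $v$, rule out the two extremal leg configurations as exactly the cases where $T\cup P$ is a skeleton, and otherwise splice the legs with pieces of $P'$ to get two paths of length at least~$4$. If anything, your thresholds ($\ell_1=\ell_2=3$ in the black case, $\ell_1=\ell_2=2$ in the red case) are the correct ones; the corresponding values written in the paper's proof appear to be off by one.
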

}
\begin{proof}{ Let $\lambda$ be the coloring of $T$.  By
    Observation~\ref{obs:1} and Proposition~\ref{prop:fromanypath},
    there exists a brrb-path $P'$ such that 
    $P'$ is the starting path of some building sequence of~$T$, and if
    $\lambda(v)$ is black (resp. red), then $v$ is an end vertex of
    $P'$ (resp. an inner vertex of $P'$).  Since $P'$ is a starting
    path, it suffices to prove that $P'\cup P$ has a $4$-pc~decomposition.  
    We consider the partition
    $P_1, P_2$ of~$P$, where $P_1$ and $P_2$ have $v$ as an end
    vertex.  In the case that $\lambda(v)$ is black, we have that $v$
    is an odd vertex in $T$, and since $d_o(T\cup P)\geq 3$, we have
    that $P_i$ has length at least~3, for each $i=1,2$.  If
    $\ell(P_1)=\ell(P_2)=3$, then $T\cup P$ is a skeleton, a case
    which we do not need to analyze.  Therefore, without loss of
    generality we can assume that $\ell(P_1)\geq 4$, and then, $P'\cup
    P$ can be decomposed into $P_1$ and $P' \cup P_2$.  Otherwise,
    $\lambda(v)$ is red, and then $v$ is an even vertex in $T$ and is at
    distance one (in $T$) of an odd vertex. And since $d_o(T\cup P)\geq 3$, we
    have that $P_i$ has length at least~2, for each $i=1,2$.  If
    $\ell(P_1)=\ell(P_2)=2$, then $T\cup P$ is a skeleton, a case which
    we do not need to analyze.  Hence, without loss of generality we can
    assume that $\ell(P_1)\geq 3$, and then, denoting by $v'$ the end
    vertex of $P'$ at distance one of $v$, we have that $P'\cup P$
    can be decomposed into $P_1\cup vv'$ and $(P'-vv') \cup P_2$.}
\end{proof}

\begin{lemma}\label{lemma:2cycles}
Let $G \in \mathcal{G}$ be a graph which is not a hanging square graph. If $Q_1$ and $Q_2$ are two
edge-disjoint squares in $G$ such that $G-E(Q_1)$ and $G-E(Q_2)$ are hanging-square
graphs, then $G$ has a \hbox{4-pc decomposition.}
\end{lemma}
\begin{proof}
  Let $H= G-E(Q_2)$ and {$V_1 = V(Q_1) \setminus V(T_H)$.} If $V_1 \cap V(Q_2)\neq\emptyset$, then
  the result follows by Lemma~\ref{lemma:length4}. Therefore, we assume that $V_1\cap
  V(Q_2)=\emptyset$.  Since $G$ is not a hanging-square graph, we can assume
  that there exists a brrb-path $P$ in $H$ such that $V(P)\cap V(Q_2)\neq\emptyset$ and
  $V(P)\cap V(Q_1)\neq\emptyset$.  
  By {Proposition~\ref{prop:fromanypath},} $P$ is the starting path of some
  building sequence $P, P_1, \ldots, P_t$ of $T_H$.
  Without loss of generality, we can assume that 
  $P=x_0x_1x_2x_3$ is occupied at $x_0,x_2$ by $Q_1$, in $H$.  Then, either
\begin{itemize}
 \item[(i)] $P$ is occupied at $x_1,x_3$ by $Q_2$, in $G-E(Q_1)$, or
 \item[(ii)] $Q_2$ is a 1-bunch with joint $x_1$, or
 \item[(iii)] $Q_2$ is a 1-bunch with joint $x_0$
   (resp. $x_2$) 
 and there exists $D$, a building graph in $\{P_1, \ldots, P_t\}$ 
 or a 1-bunch, with joint
 $x_2$ (resp. $x_0$) .

 \end{itemize}
In cases (i) and (ii) it is easy to prove that $P \cup Q_1 \cup Q_2$
has a decomposition into a path of length 5 and a path of length 6. In case~(iii), it can be shown
that $P \cup Q_1 \cup Q_2 \cup D$ decomposes into $3$~paths of length at
least~$4$. Further details of the proof are left to the reader.
\end{proof}

\section{Concluding remarks}\label{sec:concludremarks}

It would be interesting to broaden further the class of graphs for
which properties on path (or path-cycle) decompositions can be
well-characterized. In this direction, the study of the class of
triangle-free graphs with odd distance at least~$2$ is a challenging
problem. It is not so likely that a nice characterization (as in the
case of odd distance at least~$3$) can be found, but it would be
interesting to study path decomposition properties of this class of
graphs.

{ Finally, we note that the class of hanging-square graphs can be
recognized in polynomial time. To see this, let us say that a square
in a graph $G$ is \emph{good} if it has either $2$~vertices or
$3$~vertices of degree~$2$ in $G$. Given a triangle-free graph~$G$
with odd distance at least~$3$, we look for a square and check whether
it is good. If yes, we look for a maximal bunch, say $B$, that
contains it, and keep the information on its joints. Then, we delete
the edges of $B$ and repeat the process considering the resulting
graph, while it is connected. If the resulting graph is not connected
or if it contains cycles but none of them is a good square, then we
can conclude that the original graph is not a hanging-square graph. If
the resulting graph is a tree, say $T$, then we have to check whether
it is a skeleton. It is not difficult to see that the latter step can
be done in polynomial time, and that if $T$ is a skeleton, then we can
find in polynomial time a building sequence $\mathcal{S}=P_0, P_1,
\ldots, P_t$ of $T$.  If $T$ is not a skeleton, then the original
graph is not a hanging-square graph.  If $T$ is a skeleton, we have to
check whether all maximal bunches, whose edges were deleted
previously, have its joints intersecting properly (according to
Definition~\ref{def:h-sss}) the paths $P_i$ of $\mathcal{S}$.}

{We only sketched the ideas behind a polynomial-time recognition
  algorithm for the hanging-square graphs, as this algorithmic aspect
  is not the focus of this paper, but we wanted to discuss its
  consequence. We note that, in view of Theorem~\ref{theo:main}, 
the problem of deciding whether a graph
   in $\mathcal{G}$ admits a \hbox{$4$-pc} decomposition can be solved in
  polynomial time (the certificate that it belongs to the class $\coNP$
  being precisely the certificate that it is a hanging-square graph).}


\bigskip

\acknowledgements  We would like to thank F\'abio Botler for extensive discussions,
and the referees, for the very careful reading and for pointing out spots that
needed correction or clarification. 
\nocite{*}

\bibliography{biblio_gallai}

\end{document}

  otherwise, $T_H \cup C$ would have a $4$-$pc$ decomposition, since 
  $\|T_H \cup C\| < \|G\|$ or by Lemma~\ref{lemma:tree1}, 
  and this could be extended to a $4$-$pc$ decomposition of $G$, a contradiction. 
Let $S$ be the smallest set of cycles of $H$
  such that $T_H \cup C \cup S$ is not a hanging-square graph. 
If $T_H \cup C \cup S \neq G$, then $T_H \cup C \cup S$ would have a
  $4$-$pc$ decomposition, and this could be extended to a 
  $4$-$pc$ decomposition of $G$, a contradiction.
So, it must be that $T_H \cup C \cup S = G$, and then for any cycle $C'$ in $S$,
  the graph $G- E(C')$ is a hanging-square graph. 
Therefore, by Lemma \ref{lemma:almost_h-s},
  the graph $G$ has a $4$-$pc$ decomposition, a contradiction.
  
Then it holds that the cycle $C$ does not intersect the underlying tree $T_H$ of $H$.
Since $G$ is connected, $C$ contains a vertex of at least one of the cycles of $H$, let there $C'$ denote it.
By Lemma~\ref{lemma:undtree2cycl}, the graph $T_H \cup C' \cup C$
  has a $4$-$pc$ decomposition, which leads to a contradiction.

